\documentclass[12pt,twoside]{article}
\usepackage{graphicx}
\usepackage{amssymb,array,arydshln} \usepackage{amsfonts,amsbsy,amsthm,amssymb,amsmath,tikz} 
\allowdisplaybreaks \usepackage{cite}
\usepackage{pst-all} 
\textwidth 172mm \topmargin 0cm \oddsidemargin 1.5 cm \evensidemargin 1.5 cm \hoffset=-1.8cm

\def\bpsp{\begin{pspicture}} \def\epsp{\end{pspicture}}

\newtheorem{theorem}{Theorem}[section] \newtheorem{remark}[theorem]{Remark} \newtheorem{example}[theorem]{Example}
\newtheorem{lemma}[theorem]{Lemma} \newtheorem{corollary}[theorem]{Corollary} \newtheorem{definition}[theorem]{Definition}
\newtheorem{proposition}[theorem]{Proposition}  \newtheorem{note}{Note}
\newtheorem{case}{Case} \newtheorem{conjecture}{Conjecture} \newtheorem{question}{Question} 
\newcommand{\bea}{\begin{eqnarray}} \newcommand{\eea}{\end{eqnarray}} \newcommand{\beq}{\begin{eqnarray*}}
\newcommand{\eeq}{\end{eqnarray*}}

  \def\m4{\mbox{\rm ~(mod $4$)}}

\def \bd{\begin{definition}} \def \ed{\end{definition}} \def \bqu{\begin{question}} \def \equ{\end{question}} \def
\bcc{\begin{conjecture}} \def \ecc{\end{conjecture}} \def \bt{\begin{theorem}} \def \et{\end{theorem}} \def
\bl{\begin{lemma}} \def \el{\end{lemma}} \def \bc{\begin{corollary}} \def \ec{\end{corollary}} \def \be{\begin{equation}}
\def \ee{\end{equation}} \def \ben{\begin{enumerate}} \def \een{\end{enumerate}} \def \ba{\begin{array}} \def
\ea{\end{array}} \def \bp{\begin{proposition}} \def \ep{\end{proposition}} \def \bx{\begin{example}} \def
\ex{\end{example}} \def \br{\begin{remark}} \def \er{\end{remark}} \def \bdsc{\begin{description}} \def
\edsc{\end{description}}     \def
\bn{\begin{case}} \def \en{\end{case}} \def \bnt{\begin{note}} \def \ent{\end{note}} \def\1{1\!\!1} 
     
   \def\mm2{\mbox{\rm ~(mod $2$)}}
\def\m4{\mbox{\rm ~(mod $4$)}}  
   \def\m{\mu} 
 \def\1{\textbf{1}} \def\0{\textbf{0}}   
\linespread{1.3} 
\begin{document} \title{On the eigenvalues and energy of the $A_{\alpha}$-matrix of graphs  } \author{
Nijara Konch$ ^{a} $, A. Bharali$ ^{b} $, S. Pirzada$ ^{c} $\\
$^{a,b}${\em Department of Mathematics, Dibrugarh University, Assam, India}\\
$^{c}${\em Department of Mathematics, University of Kashmir, Srinagar, Kashmir,
India}\\ $ ^{a} $\texttt{rs\_nijarakonch@dibru.ac.in}; $ ^{b} $\texttt{a.bharali@dibru.ac.in}; \\$^{c}$\texttt{pirzadasd@kashmiruniversity.ac.in} } \date{}

\pagestyle{myheadings} \markboth{Konch, Bharali, Pirzada}{On the eigenvalues and energy of the $A_{\alpha}$-matrix of graphs } \maketitle \vskip 5mm
\begin{abstract}
For a graph $G$, the generalized adjacency matrix $A_\alpha(G)$ is the convex combination of the diagonal matrix $D(G)$ and the adjacency matrix $A(G)$ and is defined as $A_\alpha(G)=\alpha D(G)+(1-\alpha) A(G)$ for $0\leq \alpha \leq 1$. This matrix has been found to be useful in merging the spectral theories of $A(G)$ and the signless Laplacian matrix $Q(G)$ of the graph $G$. The generalized adjacency energy or $A_\alpha$-energy is the mean deviation of the $A_\alpha$-eigenvalues of $G$ and is defined as $E(A_\alpha(G))=\sum_{i=1}^{n}|p_i-\frac{2\alpha m}{n}|$, where $p_i$'s are $A_\alpha$-eigenvalues of $G$. In this paper, we investigate the $A_\alpha$-eigenvalues of a strongly regular graph $G$. We observe that $A_\alpha$-spectral radius $p_1$ satisfies $\delta(G)\leq p_1 \leq \Delta(G)$, where $\delta(G)$ and $\Delta(G)$ are, respectively, the smallest and the largest degrees of $G$. Further, we show that the complete graph is the only graph to have exactly two distinct $A_\alpha$-eigenvalues. We obtain lower and upper bounds of $A_\alpha$-energy in terms of order, size and extremal degrees of $G$. We also discuss the extremal cases of these bounds.\\

\noindent{\footnotesize \bf Keywords: \normalfont Adjacency matrix;  $A_\alpha$-matrix; $A_\alpha$-eigenvalues; $A_\alpha$-energy}

\vskip 3mm \noindent {\footnotesize \bf AMS subject classification (2010): \normalfont 05C31, 05C50}
\end{abstract}
\section{Introduction}
Let $G=G(V(G), E(G))$ be a simple graph with $n$ vertices and $m$ edges, where $V(G)=\{v_1, v_2, \dots, v_n\}$ and $E(G)=\{ e_1, e_2, \dots, e_m\}$ is the vertex set and the edge set of $G$, respectively. The degree of a vertex $v_i$, denoted by $d(v_i)$ or simply $d_i$, is equal to the number of edges incident to $v_i$. An edge connecting vertices $u$ and $v$ is denoted by $uv$. A graph is regular if every vertex possesses equal degree. For a graph $G$ with order $n$, the adjacency matrix associated with $G$ is an $n\times n$ matrix defined as $A(G)=(a_{ij})$, where $a_{ij}=1$ if vertex $v_i$ is adjacent to vertex $v_j$, otherwise 0. The eigenvalues of the adjacency matrix are the eigenvalues of the graph. The notion of graph energy was first introduced by Gutman  \cite{10} as the sum of absolute values of the eigenvalues of $A(G).$ The diagonal matrix $D(G)$ of a graph $G$ is defined as $D(G)=(d_{ij})$, where $d_{ij}=d(v_i)$ (or simply $d_i$) when $i=j$ and otherwise 0. A graph $G$ is said to be connected if  there is a path between every two vertices of $G$, otherwise $G$ is said to be disconnected.
\par  Over the years, several adjacency like matrices associated with a graph have been defined, for instance, Laplacian matrix $L(G)=D(G)-A(G)$, signless Laplacian matrix $Q(G)=D(G)+A(G)$, distance matrix, distance Laplacian (signless Laplacian) matrix $L^{D}(G)$ ($Q^D(G)$) and a plethora of studies had been reported so far, for its spectrum, energies and on various associated parameters. For more details, one can refer \cite{11,12,13,14}.
\par  Nikiforov \cite{1} introduced the generalized matrix $A_\alpha(G)$ (or simply $A_\alpha$, if there is no confusion) as a convex combination of $D(G)$ and $A(G)$ of $G$ to coalesce the spectral theories of adjacency and signless matrices such that $A_\alpha(G)=\alpha D(G)+(1-\alpha) A(G)$, where $0 \leq \alpha \leq 1$ and put forwarded various aspects as well as open problems. From definition, it is clear that $A_0(G)=A(G)$, $A_1(G)=D(G)$, $A_{\frac{1}{2}}(G)=\frac{1}{2}Q(G)$. 
\par Likewise Gutman et al. \cite{15} introduced the Laplacian energy $LE(G)$ of graphs, Abreu et al. \cite{16} introduced the signless Laplacian energy $QE(G)$ of graphs, Gou et al. \cite{6} defined $A_\alpha$-energy of a graph $G$ of order $n$ and size $m$ as $E(A_\alpha(G))=\sum_{i=1}^n\big|p_i-\frac{2\alpha m}{n} \big|$, where $p_i$'s are $A_\alpha$- eigenvalues of $G$. It is seen that $E(A_0(G))=E(G)$ and $E(A_{\frac{1}{2}}(G))=\frac{1}{2} QE(G)$ which shows that $A_\alpha$- energy merges with $E(G)$ and $QE(G)$. More work on $A_\alpha$- spectra and corresponding energy for which one can refer \cite{4,6,17,18,19}.
\par The rest of the paper is organized as follows. Some notations and preliminaries are given in Section 2. In Section 3, we investigate the $A_\alpha$-eigenvalues of a strongly regular graph $G$. We observe that $A_\alpha$-spectral radius $p_1$ satisfies $\delta(G)\leq p_1 \leq \Delta(G)$, where $\delta(G)$ and $\Delta(G)$ are, or simply $\delta$ and $\Delta$ are, respectively, the smallest and the largest degrees of $G$. Further, we show that the complete graph is the only graph to have exactly two distinct $A_\alpha$-eigenvalues. In Section 4, We obtain lower and upper bounds of $A_\alpha$-energy in terms of order, size and extremal degrees of $G$. We also discuss the extremal cases of these bounds.
\section{Notations and Preliminaries}

 Let $(d_1,d_2,\dots,d_n)$ be the degree sequence of $G$. The Zagreb index of $G$, denoted by $M_1$, is defined as $M_1=\sum_{i=1}^{n} d_i^2.$ Throughout the paper, the minimum and maximum degree of a vertex in $G$ is denoted by $\delta$ and $\Delta$ respectively. The trace of a matrix $A$ is the sum of the diagonal entries of $A$, which is denoted by $tr(A)$. For a graph $G$ with $n$ vertices and $m$ edges, let $p_i$, $i\in \{1,\dots, n\}$ be the eigenvalues of $A_\alpha(G)$. The largest eigenvalue, say $p_1$, is called the $A_\alpha$- spectral radius of $G$. The smallest $A_\alpha$ eigenvalue is denoted by $p_n$.  The matrix $J$ and $I$ represent the matrix with all entries 1 and identity matrix of suitable order, respectively.
 \par Let $G$ be an $r$- regular graph with $n$ vertices. $G$ is said to be strongly regular with parameters $(n,r,a,c)$ if $G$ has $a \,(\geq 0)$ common neighbours between any two adjacent vertices and $c\, (\geq 0)$ common neighbours between any two non-adjacent vertices in $G$.
\par The following results will be frequently used in the present work.
\begin{lemma}\em \cite{1} \label{lemma 1}
The eigenvalues of $A_\alpha(K_n)$ are $ p_1=n-1$ and $p_{k}=\alpha n-1$ for $2\leq k\leq n$.
\end{lemma}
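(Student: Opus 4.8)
The plan is to use the fact that $K_n$ is $(n-1)$-regular together with the elementary observation that both $A(K_n)$ and $D(K_n)$ are polynomials in the all-ones matrix $J$. First I would record that $A(K_n)=J-I$ and $D(K_n)=(n-1)I$, so that
\[
A_\alpha(K_n)=\alpha(n-1)I+(1-\alpha)(J-I)=(\alpha n-1)I+(1-\alpha)J,
\]
where the last equality is just the simplification $\alpha(n-1)-(1-\alpha)=\alpha n-1$.

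Next I would invoke the spectral decomposition of $J$: it has eigenvalue $n$ with one-dimensional eigenspace spanned by the all-ones vector $\mathbf{1}$, and eigenvalue $0$ with $(n-1)$-dimensional eigenspace $\mathbf{1}^{\perp}$. Since $A_\alpha(K_n)$ differs from $(1-\alpha)J$ only by the scalar matrix $(\alpha n-1)I$, it has exactly the same eigenvectors as $J$. Evaluating on $\mathbf{1}$ gives the eigenvalue $(\alpha n-1)+(1-\alpha)n=n-1$, which is $p_1$; evaluating on $\mathbf{1}^{\perp}$ gives $(\alpha n-1)+(1-\alpha)\cdot 0=\alpha n-1$, occurring with multiplicity $n-1$. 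Together these account for all $n$ eigenvalues, which establishes the lemma.

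Alternatively, and perhaps more in the spirit of the later sections, one can argue purely in terms of the adjacency spectrum: for any $r$-regular graph the $A_\alpha$-eigenvalues are exactly $\alpha r+(1-\alpha)\mu$ as $\mu$ runs over the adjacency eigenvalues (with the same eigenvectors). Since $K_n$ is $(n-1)$-regular with adjacency spectrum $\{\,n-1,\ (-1)^{(n-1)}\,\}$, this yields $p_1=\alpha(n-1)+(1-\alpha)(n-1)=n-1$ and $p_k=\alpha(n-1)+(1-\alpha)(-1)=\alpha n-1$ for $2\le k\le n$.

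There is essentially no obstacle in this proof; the only points requiring a little care are the arithmetic that collapses the coefficient of $I$, and the sanity check that the eigenvalue attached to $\mathbf{1}$ comes out to $n-1$ independently of $\alpha$ — as it must, since $\mathbf{1}$ is the Perron eigenvector of an $r$-regular graph and $A_\alpha(G)\mathbf{1}=r\mathbf{1}$ in that case.
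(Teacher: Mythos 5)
Your proof is correct. Note that the paper itself offers no proof of this lemma --- it is quoted directly from Nikiforov's paper \cite{1} --- so there is nothing internal to compare against; both of your arguments (the decomposition $A_\alpha(K_n)=(\alpha n-1)I+(1-\alpha)J$ with the spectral decomposition of $J$, and the general fact that for an $r$-regular graph the $A_\alpha$-eigenvalues are $\alpha r+(1-\alpha)\mu$ with $\mu$ ranging over the adjacency eigenvalues) are standard, complete, and give the stated spectrum $\{[n-1]^1,[\alpha n-1]^{n-1}\}$.
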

\begin{lemma}\em \cite{4} \label{lemma 2}
Let $G$ be a connected graph of order $n$ and size $m$. Then
\begin{enumerate}
\item[\bf (i)] $\sum_{i=1}^{n} p_i=2 \alpha m$
\item[\bf (ii)] $\sum_{i=1}^{n} p_i^2=\alpha^2 M_1+(1-\alpha)^2 2m$
\item[\bf (iii)] $\sum_{i=1}^{n}\eta_i^2=\alpha^2M_1+(1-\alpha)^2 2m-\dfrac{4\alpha^2 m^2}{n}$,
\end{enumerate}
where $\eta_i=p_i-\frac{2\alpha m}{n}$, $i\in \{1,\dots, n\}$, are auxiliary eigenvalues of $A_\alpha(G)$.
\end{lemma}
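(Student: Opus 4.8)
The plan is to read all three identities as statements about the trace of $A_\alpha(G)$ and of its square, using that the trace of a real symmetric matrix equals the sum of its eigenvalues and that the adjacency matrix of a simple graph has zero diagonal. For \textbf{(i)} I would write
\[
\sum_{i=1}^{n} p_i = tr\big(A_\alpha(G)\big) = \alpha\, tr\big(D(G)\big) + (1-\alpha)\, tr\big(A(G)\big),
\]
and then observe that $tr(D(G)) = \sum_{i=1}^{n} d_i = 2m$ by the handshake lemma while $tr(A(G)) = 0$, which gives $\sum_{i=1}^{n} p_i = 2\alpha m$.

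For \textbf{(ii)} the key step is to expand
\[
A_\alpha(G)^2 = \alpha^2 D(G)^2 + \alpha(1-\alpha)\big(D(G)A(G) + A(G)D(G)\big) + (1-\alpha)^2 A(G)^2
\]
and take traces term by term. Here $tr(D(G)^2) = \sum_{i=1}^{n} d_i^2 = M_1$; the mixed term contributes nothing, since the $(i,i)$ entry of $D(G)A(G)$ equals $d_i a_{ii} = 0$, so $tr(D(G)A(G)) = tr(A(G)D(G)) = 0$; and $tr(A(G)^2) = \sum_{i,j} a_{ij} a_{ji} = \sum_{i,j} a_{ij}^2 = 2m$, each edge being counted in exactly two ordered pairs. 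Summing, $\sum_{i=1}^{n} p_i^2 = tr(A_\alpha(G)^2) = \alpha^2 M_1 + (1-\alpha)^2 2m$.

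For \textbf{(iii)} I would simply expand the shifted squares,
\[
\sum_{i=1}^{n} \eta_i^2 = \sum_{i=1}^{n}\Big(p_i - \tfrac{2\alpha m}{n}\Big)^2 = \sum_{i=1}^{n} p_i^2 - \frac{4\alpha m}{n}\sum_{i=1}^{n} p_i + \frac{4\alpha^2 m^2}{n},
\]
and then substitute $\sum_{i=1}^{n} p_i = 2\alpha m$ from part (i) into the middle term and $\sum_{i=1}^{n} p_i^2 = \alpha^2 M_1 + (1-\alpha)^2 2m$ from part (ii); the two $m^2/n$ contributions combine to $-\tfrac{4\alpha^2 m^2}{n}$, which yields the claimed identity.

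I do not anticipate a genuine obstacle: the whole argument is elementary linear algebra. The only points needing a moment's care are the vanishing of the mixed traces $tr(D(G)A(G)) = tr(A(G)D(G)) = 0$, which relies on $A(G)$ having zero diagonal, and the double-counting bookkeeping in $tr(A(G)^2) = 2m$. Note also that connectedness of $G$ is not actually used anywhere in the proof; it is only inherited from the hypotheses of the cited source.
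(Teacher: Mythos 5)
Your proof is correct and is the standard trace argument; the paper itself only cites this lemma from the literature without proof, and your computation (traces of $A_\alpha$ and $A_\alpha^2$, with the mixed terms vanishing because $A(G)$ has zero diagonal, followed by the expansion of the shifted squares in (iii)) is exactly how the cited source establishes it. Your observation that connectedness is never used is also accurate.
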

\begin{lemma} \em \cite{5}\label{Lemma 3}
Let $n\geq 1$ be an integer and $a_1 \geq a_2 \geq \dots \geq a_n$ be some non-negative real numbers. Then $\sum_{i=1}^{n} a_i (a_1+a_n)\geq \sum_{i=1}^{n}a_i^2+n a_1a_n$ with equality if and only if $a_1=a_2=\dots=a_s$ and $a_{s+1}=\dots=a_n$ for some $s \in \{ 1, \dots,n\}$.
\end{lemma}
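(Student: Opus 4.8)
The plan is to reduce the claimed inequality to a sum of manifestly non-negative terms via one algebraic identity. First I would expand the $i$-th summand on the left against the corresponding pieces on the right and observe that
\[
a_i(a_1+a_n)-a_i^2-a_1a_n=(a_1-a_i)(a_i-a_n)\qquad\text{for every }i.
\]
Summing this identity over $i=1,\dots,n$ gives
\[
(a_1+a_n)\sum_{i=1}^{n}a_i-\sum_{i=1}^{n}a_i^2-na_1a_n=\sum_{i=1}^{n}(a_1-a_i)(a_i-a_n),
\]
so the assertion $\sum_{i=1}^{n}a_i(a_1+a_n)\geq\sum_{i=1}^{n}a_i^2+na_1a_n$ is equivalent to the non-negativity of the right-hand sum.

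The second step is immediate from the hypothesis: since $a_1\geq a_2\geq\dots\geq a_n$, for each index $i$ we have $a_1-a_i\geq0$ and $a_i-a_n\geq0$, hence every product $(a_1-a_i)(a_i-a_n)$ is non-negative and so is their sum. (In fact only the ordering is used here; non-negativity of the $a_i$ is not needed for the inequality itself, though it is harmless to assume.)

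For the equality characterization, a sum of non-negative reals vanishes exactly when each term vanishes, i.e.\ for every $i$ either $a_i=a_1$ or $a_i=a_n$. Because the sequence is non-increasing, the indices with $a_i=a_1$ form an initial block $\{1,\dots,s\}$ and the indices with $a_i=a_n$ form a final block, and together they exhaust $\{1,\dots,n\}$; thus equality holds iff $a_1=\dots=a_s$ and $a_{s+1}=\dots=a_n$ for some $s\in\{1,\dots,n\}$, and conversely any such configuration makes every summand zero. The only point requiring a word of care is the degenerate case $a_1=a_n$ (all entries equal), which is subsumed by taking $s=n$ with an empty second block. I do not expect any genuine obstacle: the entire argument rests on the displayed factorization together with the sign of its two factors, so the ``hard part,'' such as it is, is merely spotting that identity and stating the equality case cleanly.
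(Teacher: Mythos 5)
Your proof is correct, and the paper itself offers no proof to compare against: Lemma~\ref{Lemma 3} is quoted from \cite{5} without argument. Your factorization $a_i(a_1+a_n)-a_i^2-a_1a_n=(a_1-a_i)(a_i-a_n)$ checks out, each factor is non-negative by the ordering alone, and the equality analysis (each summand must vanish, forcing the initial/terminal block structure) is the standard and complete way to prove this Diaz--Metcalf-type inequality. Your side remark that non-negativity of the $a_i$ is not actually needed is also accurate.
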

\begin{lemma} \em \cite{1} \label{Lemma 4}
Let $G$ be a graph with $n$ vertices and $m$ edges. Then $p_1 \geq \frac{2m}{n}$, where equality holds if and only if $G$ is regular.
\end{lemma}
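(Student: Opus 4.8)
\emph{Plan.} The claim is sharper than what the trace alone gives: from $\mathrm{tr}(A_\alpha(G))=\alpha\sum_i d_i=2\alpha m$ one only gets $p_1\ge 2\alpha m/n$, which is weaker than $p_1\ge 2m/n$ when $\alpha<1$. So the real content is to pick the right test vector in the Rayleigh--Ritz characterisation of the largest eigenvalue of the real symmetric matrix $A_\alpha(G)$, namely the all-ones vector $\mathbf{1}=(1,\dots,1)^\top$. Concretely, since $A_\alpha(G)=\alpha D(G)+(1-\alpha)A(G)$ is symmetric,
\[
p_1=\max_{x\neq 0}\frac{x^\top A_\alpha(G)x}{x^\top x}\ \ge\ \frac{\mathbf{1}^\top A_\alpha(G)\mathbf{1}}{\mathbf{1}^\top\mathbf{1}} .
\]
Here $\mathbf{1}^\top D(G)\mathbf{1}=\sum_i d_i=2m$ and $\mathbf{1}^\top A(G)\mathbf{1}=\sum_{i,j}a_{ij}=2m$, so the numerator equals $\alpha\cdot 2m+(1-\alpha)\cdot 2m=2m$ (independently of $\alpha$), while the denominator is $n$. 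This immediately yields $p_1\ge 2m/n$.

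\emph{Equality.} Equality in the displayed Rayleigh--Ritz bound holds exactly when the test vector lies in the eigenspace of the largest eigenvalue, i.e. when $A_\alpha(G)\mathbf{1}=p_1\mathbf{1}$. Comparing $i$-th coordinates and using that the $i$-th row sum of $A_\alpha(G)$ equals $\alpha d_i+(1-\alpha)d_i=d_i$, this forces $d_i=p_1$ for every $i$, so $G$ is regular (and then $p_1=2m/n$). Conversely, if $G$ is $r$-regular then $A_\alpha(G)=\alpha r I+(1-\alpha)A(G)$, and since the largest adjacency eigenvalue of an $r$-regular graph is $r$, we obtain $p_1=\alpha r+(1-\alpha)r=r=2m/n$. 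One can avoid quoting that last fact: $A_\alpha(G)\mathbf{1}=r\mathbf{1}$ exhibits $r$ as an eigenvalue, and $A_\alpha(G)$ is entrywise nonnegative (as $0\le\alpha\le 1$) with all row sums equal to $r$, hence its spectral radius is $r$, so $p_1=r$.

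\emph{Main obstacle.} There is no serious difficulty — the statement is short and classical — and the only points that need care both lie in the equality analysis: (i) invoking the sharp (``if and only if'') form of Rayleigh--Ritz, so that equality forces $\mathbf{1}$ genuinely into the top eigenspace rather than being a mere coincidence of averages; and (ii) in the converse direction, upgrading $p_1\ge r$ to $p_1=r$, which uses either the Perron--Frobenius property of the adjacency matrix of a regular graph or the nonnegativity and constant-row-sum property of $A_\alpha(G)$. Everything else is a one-line computation.
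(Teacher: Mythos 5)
Your proof is correct. The paper states this lemma without proof (it is quoted from Nikiforov's paper \cite{1}); your Rayleigh--Ritz argument with the all-ones test vector, using that the $i$-th row sum of $A_\alpha(G)$ equals $\alpha d_i+(1-\alpha)d_i=d_i$ independently of $\alpha$, is exactly the standard argument behind the cited result, and both directions of your equality analysis are sound.
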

\begin{lemma}\em \cite{7} \label{lemma 5}
Let $B$ be a non-negative $n\times n$ matrix with $n\geq 3$, $\rho(B)$ be the largest eigenvalue of B, $a=tr(B)$ and $b=tr( B^2)$. Then
\begin{align*}
\rho(B)\leq \dfrac{a}{n}+\sqrt{\dfrac{n-1}{n}\left(b-\dfrac{a^2}{n}\right)}.
\end{align*}
\end{lemma}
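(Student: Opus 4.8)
The plan is to derive the bound from a single application of the Cauchy--Schwarz inequality to the spectrum of $B$, after first re-centering $B$ so that its trace vanishes. I will use that the spectrum of $B$ is real, which is exactly the setting in which this lemma is applied below --- there $B$ is $A_\alpha(G)$ or a principal submatrix of it, hence symmetric. Write the eigenvalues of $B$ as $\mu_1\ge\mu_2\ge\cdots\ge\mu_n$, so that $\rho(B)=\mu_1$, $a=tr(B)=\sum_{i=1}^n\mu_i$, and $b=tr(B^2)=\sum_{i=1}^n\mu_i^2$.

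First I would pass to $C=B-\frac an I$. Expanding $C^2=B^2-\frac{2a}{n}B+\frac{a^2}{n^2}I$ gives at once $tr(C)=a-n\cdot\frac an=0$ and $tr(C^2)=b-\frac{2a^2}{n}+\frac{a^2}{n}=b-\frac{a^2}{n}$, while the eigenvalues of $C$ are $\nu_i=\mu_i-\frac an$; in particular $\nu_1$ is the largest eigenvalue of $C$ and $\mu_1=\nu_1+\frac an$. Hence it suffices to prove $\nu_1\le\sqrt{\frac{n-1}{n}\,tr(C^2)}$.

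The heart of the argument is to bound $\nu_1$ using $\sum_{i=1}^n\nu_i=0$, i.e. $\nu_1=-\sum_{i=2}^n\nu_i$. Applying Cauchy--Schwarz to the $n-1$ numbers $\nu_2,\dots,\nu_n$,
\[
\nu_1^{\,2}=\Big(\sum_{i=2}^n\nu_i\Big)^{2}\le (n-1)\sum_{i=2}^n\nu_i^{\,2}=(n-1)\big(tr(C^2)-\nu_1^{\,2}\big),
\]
which rearranges to $n\,\nu_1^{\,2}\le (n-1)\,tr(C^2)$, that is $\nu_1\le\sqrt{\frac{n-1}{n}\,tr(C^2)}$. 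Substituting $\nu_1=\mu_1-\frac an$ and $tr(C^2)=b-\frac{a^2}{n}$ then gives exactly $\rho(B)=\mu_1\le\frac an+\sqrt{\frac{n-1}{n}(b-\frac{a^2}{n})}$. One could equally skip the centering and argue directly with $\mu_1$: from $\sum_{i\ge2}\mu_i=a-\mu_1$ and $\sum_{i\ge2}\mu_i^2=b-\mu_1^2$, Cauchy--Schwarz gives $(a-\mu_1)^2\le (n-1)(b-\mu_1^2)$, a quadratic inequality $n\mu_1^2-2a\mu_1+a^2-(n-1)b\le 0$ whose larger root is precisely the right-hand side of the lemma; its discriminant $4(n-1)(nb-a^2)$ is nonnegative by Cauchy--Schwarz applied to all $n$ eigenvalues, and since the parabola opens upward $\mu_1$ lies below that root.

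I expect the only genuinely delicate point to be the appeal to a real spectrum. For an arbitrary entrywise-nonnegative but non-symmetric $B$ the non-Perron eigenvalues may be complex and the inequality can fail outright (for instance the $3\times 3$ cyclic permutation matrix has $a=b=0$ yet $\rho(B)=1$), so the lemma should be read for symmetric nonnegative $B$ --- which is all that is needed here. Granting that, the remaining work is entirely elementary: a one-line computation of $tr(C^2)$ and a single Cauchy--Schwarz estimate. The hypothesis $n\ge3$ is more than the inequality requires; already $n\ge2$ makes the index set $\{2,\dots,n\}$ nonempty and the argument go through.
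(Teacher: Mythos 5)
Your proof is correct. The paper itself offers no proof of this lemma --- it is quoted from Chen and Wang \cite{7} --- so there is no in-paper argument to compare against; but your derivation is the standard one for this ``Lagrange-type'' eigenvalue bound: center the matrix to trace zero, then apply Cauchy--Schwarz to the remaining $n-1$ eigenvalues (or, equivalently, your second variant: read off the larger root of the quadratic $n\mu_1^2-2a\mu_1+a^2-(n-1)b\le 0$). Both computations check out, including the identification $tr(C^2)=b-\frac{a^2}{n}$ and the discriminant $4(n-1)(nb-a^2)\ge 0$.

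Your one substantive observation is also well-taken and worth keeping: as literally stated, ``non-negative $n\times n$ matrix'' is not enough, since the argument uses $tr(B^2)=\sum_i\mu_i^2$ and $\rho(B)=\mu_1$ over a \emph{real} spectrum. Your counterexample (the $3\times3$ cyclic permutation matrix, with $a=b=0$ but $\rho=1$) shows the inequality genuinely fails without symmetry. In the source \cite{7} and in every application in this paper the matrix is $A_\alpha(G)$, which is real symmetric, so nothing downstream is affected; but the hypothesis of the lemma should be read as ``symmetric non-negative,'' exactly as you say. Your remark that $n\ge 2$ already suffices is likewise correct.
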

\begin{lemma} \em \cite{8} \label{lemma 6}
If $G$ is a simple graph with $n \geq 2$ vertices, $m$ edges and vertex degrees $d_1, d_2, \dots, d_n$, then $\sum_{i=1}^n d_i^2 \leq m \left(\frac{2m}{n-1}+n-2 \right)$.
\end{lemma}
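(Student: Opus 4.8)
\noindent\emph{Proof proposal.} This is de Caen's inequality, and I would begin from the edge-degree identity
\[
\sum_{i=1}^{n} d_i^{2}\;=\;\sum_{v_iv_j\in E(G)}\bigl(d_i+d_j\bigr),
\]
valid for every graph (each edge contributes $d_i+d_j$ to the two endpoint degree-squares). The idea is to peel off the edge count $m$ from the ``shape'' of the degree sequence by one application of the Cauchy--Schwarz inequality: writing the right-hand side as $\sum_{v_iv_j\in E}1\cdot(d_i+d_j)$ and squaring,
\[
\Bigl(\sum_{i=1}^{n} d_i^{2}\Bigr)^{2}\;\le\; m\sum_{v_iv_j\in E}\bigl(d_i+d_j\bigr)^{2},
\]
so that it would suffice to prove the scale-free estimate $\sum_{v_iv_j\in E}(d_i+d_j)^{2}\le\bigl(\tfrac{2m}{n-1}+n-2\bigr)\sum_{i}d_i^{2}$.

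For the reduced inequality I would localize inside a fixed edge $v_iv_j$ and use the ambient $(n-1)$-element vertex set, which is where the denominator $n-1$ should come from. Splitting the remaining $n-2$ vertices according to how many of $v_i,v_j$ each of them is adjacent to gives $d_i+d_j-2=\sum_{w\ne v_i,v_j}\lvert N(w)\cap\{v_i,v_j\}\rvert$, and applying Cauchy--Schwarz over those $n-2$ vertices (using that $x^{2}=x+2\,[x=2]$ for $x\in\{0,1,2\}$) yields
\[
(d_i+d_j-2)^{2}\;\le\;(n-2)\bigl(d_i+d_j-2+2c_{ij}\bigr),
\]
where $c_{ij}$ denotes the number of common neighbours of $v_i$ and $v_j$. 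Summing this over all edges, using $\sum_{v_iv_j\in E}c_{ij}=3\,t(G)$ with $t(G)$ the number of triangles, and re-expanding $(d_i+d_j)^{2}$ and $(d_i+d_j-2)^{2}$, should bring everything back to the invariants $\sum_i d_i^{2}$, $m$, $n$ and $t(G)$.

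The step I expect to be the main obstacle is the final accounting. A crude execution of the above only recovers the weaker bound $\sum_i d_i^{2}\le 2m(n-1)$, and the content of the theorem is exactly the extra slack $2m(n-1)-m\bigl(\tfrac{2m}{n-1}+n-2\bigr)=\tfrac{2m\,\overline m}{n-1}$, where $\overline m=\binom n2-m$ is the number of non-edges. Squeezing out this slack requires using more than ``every vertex lies in the closed neighbourhood of each edge'': in particular, \emph{graphicality} of the degree sequence is indispensable, since the claimed inequality already fails for the non-realizable sequence consisting of two entries equal to $n-1$ and the rest equal to $0$. So one must genuinely exploit the interaction between the vertices of excess degree and the non-edges of $G$, and I would expect the delicate point to be showing that the common-neighbour/triangle contributions together with this non-edge deficiency combine to precisely the stated constant, with equality in the extremal configurations (notably complete graphs and stars).
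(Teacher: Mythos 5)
The paper offers no proof of this lemma---it is quoted directly from de Caen's paper \cite{8}---so your attempt must be judged on its own, and it does not close. The individual steps you write down are all correct: the identity $\sum_i d_i^2=\sum_{v_iv_j\in E}(d_i+d_j)$, the Cauchy--Schwarz step $\bigl(\sum_i d_i^2\bigr)^2\le m\sum_{v_iv_j\in E}(d_i+d_j)^2$, and the per-edge estimate $(d_i+d_j-2)^2\le (n-2)(d_i+d_j-2+2c_{ij})$. The genuine gap is exactly the ``final accounting'' that you defer: summing your per-edge estimate over all edges gives only
\[
\sum_{v_iv_j\in E}(d_i+d_j)^2\;\le\;(n+2)\sum_i d_i^2-2mn+6(n-2)\,t(G),
\]
where $t(G)$ is the number of triangles, so your reduced inequality would follow only from $4\sum_i d_i^2+6(n-2)t(G)\le 2mn+\tfrac{2m}{n-1}\sum_i d_i^2$, which is never stated, let alone proved, and cannot be extracted from the tools you introduce. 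The chain is in fact demonstrably too lossy: for a triangle together with two isolated vertices ($n=5$, $m=3$, $\sum_i d_i^2=12$, $t(G)=1$) the summed per-edge bound gives $\sum_{v_iv_j\in E}(d_i+d_j)^2\le 72$, whereas the reduced inequality you are aiming for requires $\le 54$ (the true value is $48$); pushing $72$ back through the first Cauchy--Schwarz yields $\sum_i d_i^2\le\sqrt{216}\approx 14.7$, weaker than de Caen's bound $13.5$ for this graph. So, as you yourself concede, what is actually established is only in the vicinity of $\sum_i d_i^2\le 2m(n-1)$, and the entire content of the lemma---the slack $\tfrac{2m\overline{m}}{n-1}$ that you correctly compute---is missing.

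Your diagnostic remarks are sound: the non-graphical sequence $(n-1,n-1,0,\dots,0)$ does violate the bound for $n>2$, so no argument using the degree sequence alone can succeed, and the extremal graphs are indeed the complete graphs and the stars. But those observations also show why your chosen route is structurally blocked: any argument that bounds each edge's contribution by $n-2$ plus a common-neighbour correction and then tries to absorb the accumulated triangle count would need an estimate of the shape $3t(G)\le\frac{2m(m-n+1)}{n-1}$, whose right-hand side is negative for a triangle plus isolated vertices. A correct proof has to couple the degrees to the non-edges globally rather than edge by edge; since the present paper uses the lemma as a black box, you should consult \cite{8} for the actual argument rather than attempt to repair this reduction.
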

\begin{proposition} \label{proposition1}
For any connected graph, the multiplicity of the largest $A_\alpha$-eigenvalue is 1.
\end{proposition}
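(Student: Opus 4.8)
The statement is the Perron--Frobenius phenomenon for the non-negative matrix $A_\alpha(G)$, and the plan is to reproduce the classical Perron-vector argument tailored to $A_\alpha$. Throughout I take $\alpha\in[0,1)$; the case $\alpha=1$ should be set aside, since $A_1(G)=D(G)$ and for a non-regular connected graph its largest eigenvalue need not be simple. First I would record the two structural facts: $A_\alpha(G)\ge 0$ entrywise, and its off-diagonal zero pattern coincides with that of $A(G)$ because every off-diagonal entry is $(1-\alpha)a_{ij}$ with $1-\alpha>0$; since $G$ is connected this makes $A_\alpha(G)$ irreducible. Also $p_1$, the largest eigenvalue, is real because $A_\alpha(G)$ is symmetric.

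The core of the proof is to show that every $p_1$-eigenvector is, up to sign, strictly positive. I would argue in two steps. (a) If $x$ is a real $p_1$-eigenvector, then from $(A_\alpha)_{ij}\ge 0$ one has $\langle A_\alpha(G)|x|,|x|\rangle\ge\langle A_\alpha(G)x,x\rangle=p_1\|x\|^2$, while the Rayleigh bound gives $\langle A_\alpha(G)|x|,|x|\rangle\le p_1\|x\|^2$; equality forces $|x|$ to be a $p_1$-eigenvector. (b) If $z\ge 0$ is a nonzero $p_1$-eigenvector with $z_i=0$ for some $i$, then the $i$-th row of $A_\alpha(G)z=p_1z$ reads $\alpha d_i z_i+(1-\alpha)\sum_{k\sim i}z_k=0$, so $(1-\alpha)\sum_{k\sim i}z_k=0$, and since $1-\alpha>0$ and $z\ge 0$, every neighbour $k$ of $i$ has $z_k=0$; propagating along paths and invoking connectedness gives $z\equiv 0$, a contradiction. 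Hence any non-negative $p_1$-eigenvector is strictly positive; in particular $|x|>0$.

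To finish, I would take an arbitrary real $p_1$-eigenvector $w$, set $c=w_1/|x|_1$ (valid since $|x|_1>0$), and note that $z:=w-c|x|$ is a $p_1$-eigenvector with $z_1=0$. If $z\ne 0$ then $|z|$ is a non-negative nonzero $p_1$-eigenvector with $|z|_1=0$, contradicting step (b); hence $z=0$ and $w=c|x|$. Therefore the $p_1$-eigenspace is one-dimensional, which is the assertion.

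The genuinely delicate point is step (b), the propagation argument, because that is precisely where both hypotheses enter: connectedness of $G$, to reach every vertex from $i$, and $\alpha<1$, so that $1-\alpha>0$ and a vanishing non-negative sum $\sum_{k\sim i}z_k$ forces each term to vanish. Dropping either hypothesis defeats the conclusion (a disconnected graph such as $2K_2$, or $\alpha=1$ with a non-regular graph), so any correct proof must use them exactly here. The remaining ingredients, the Rayleigh comparison and the sign bookkeeping for $|x|$, are routine.
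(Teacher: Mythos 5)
Your proof is correct and complete. For comparison: the paper offers no proof of this proposition at all --- it is stated bare in the preliminaries and is immediately followed by the Perron--Frobenius statement for $A_\alpha$ quoted from Nikiforov's paper (a non-negative $p_1$-eigenvector of a connected graph is positive and unique up to scaling, for $\alpha\in[0,1)$), of which the simplicity of $p_1$ is an immediate consequence. What you have written is precisely a self-contained derivation of that cited fact: irreducibility of $A_\alpha(G)$ from connectedness and $1-\alpha>0$, the Rayleigh-quotient step showing $|x|$ is again a $p_1$-eigenvector, the propagation argument excluding zero entries, and the subtraction trick $w-c|x|$ to pin down the eigenspace dimension. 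All four steps are sound, and your identification of step (b) as the place where both hypotheses are genuinely used is accurate. You also correctly flag that the hypothesis $\alpha<1$ is needed --- the paper's statement omits it, yet for $\alpha=1$ one has $A_1(G)=D(G)$, whose largest eigenvalue $\Delta$ is typically not simple even for connected graphs --- so your version is in fact more careful than the statement as printed.
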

\begin{proposition}\em \cite{1} \label{proposition2}
Let $\alpha \in [0,1)$ and let $G$ be any connected graph. Let $x$ be a non-negative eigenvector corresponding to the largest $A_\alpha$-eigenvalue $p_1$. Then $x$ is positive and is unique up to scaling.
\end{proposition}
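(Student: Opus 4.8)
The plan is to recognise this as an instance of the Perron--Frobenius theorem: realise $A_\alpha(G)$ as an irreducible nonnegative matrix and read off simplicity of its spectral radius.

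First I would dispose of the trivial case $n=1$ (the matrix is then $1\times 1$ and there is nothing to prove), so assume $n\ge 2$; I would also record that the statement is meant for $\alpha\in[0,1)$, since for $\alpha=1$ one has $A_\alpha(G)=D(G)$ and, e.g. by Lemma \ref{lemma 1}, the claim already fails for $K_n$ ($\alpha=1$ being exactly the degenerate endpoint at which $A_\alpha$ stops mixing $A$ and $D$). Next I would observe that for $\alpha\in[0,1)$ every entry of $A_\alpha(G)=\alpha D(G)+(1-\alpha)A(G)$ is nonnegative, with the off-diagonal entry in position $(i,j)$ equal to $(1-\alpha)>0$ precisely when $v_iv_j\in E(G)$. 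Since $G$ is connected with $n\ge 2$, $A(G)$ is irreducible; adding the nonnegative diagonal matrix $\alpha D(G)$ does not alter the zero/nonzero pattern of the off-diagonal entries, so $A_\alpha(G)$ is a nonnegative irreducible matrix. The Perron--Frobenius theorem then gives that its spectral radius $\rho:=\rho(A_\alpha(G))$ is a simple eigenvalue (algebraic multiplicity $1$, hence geometric multiplicity $1$ since $A_\alpha(G)$ is symmetric).

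It remains to identify $\rho$ with $p_1$. Because $A_\alpha(G)$ is real symmetric, all the $p_i$ are real; $\rho$ lies in the spectrum, so $\rho\le p_1$, while Lemma \ref{Lemma 4} gives $p_1\ge\tfrac{2m}{n}\ge 0$, whence $p_1=|p_1|\le\max_i|p_i|=\rho$. Therefore $p_1=\rho$, and by the previous step this eigenvalue has multiplicity $1$, which is exactly the assertion.

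The argument is short and I do not expect a serious obstacle; the only point requiring care is the reduction at the boundary value $\alpha=1$. If one wishes to avoid quoting Perron--Frobenius, the same conclusion can be reached self-containedly: if $A_\alpha(G)x=p_1x$ then, since $A_\alpha(G)\ge 0$ entrywise, the Rayleigh quotient forces $|x|$ (taken coordinatewise) to be a $p_1$-eigenvector as well; the eigenvalue equation together with connectedness forces $|x|>0$ everywhere and then $x$ to be of one sign; and two linearly independent such eigenvectors could be combined into a nonzero $p_1$-eigenvector with a vanishing coordinate, contradicting what was just shown. Either route yields multiplicity $1$.
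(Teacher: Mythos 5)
The paper gives no proof of this proposition---it is quoted verbatim from Nikiforov \cite{1}---so there is nothing internal to compare against; your Perron--Frobenius argument (irreducibility of $A_\alpha(G)$ from connectedness of $G$ for $\alpha\in[0,1)$, simplicity of the Perron root, and identification of the Perron root with $p_1$ via $p_1\ge \frac{2m}{n}\ge 0$) is the standard proof and is correct. The only line worth adding is the explicit deduction of positivity: since the $p_1$-eigenspace is spanned by the positive Perron vector, any nonzero nonnegative eigenvector is a positive scalar multiple of it and hence positive---a point your self-contained Rayleigh-quotient variant does cover.
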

\begin{proposition}\label{proposition2}\em \cite{1}
For a graph $G$ with $\alpha \geq \frac{1}{2}$, $A_\alpha(G)$ is positive semi definite.
\end{proposition}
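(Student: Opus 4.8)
The plan is to show directly that the quadratic form $x^{T}A_\alpha(G)x$ is non-negative for every $x\in\mathbb{R}^{n}$, which since $A_\alpha(G)$ is real symmetric is exactly positive semi-definiteness. First I would record the two elementary identities
\[
x^{T}D(G)x=\sum_{i=1}^{n}d_i x_i^{2}=\sum_{uv\in E(G)}\bigl(x_u^{2}+x_v^{2}\bigr),
\qquad
x^{T}A(G)x=2\sum_{uv\in E(G)}x_u x_v,
\]
where the first holds because the edge $uv$ contributes exactly once to each of the degrees $d_u$ and $d_v$. Combining these with $A_\alpha(G)=\alpha D(G)+(1-\alpha)A(G)$ gives
\[
x^{T}A_\alpha(G)x=\sum_{uv\in E(G)}\Bigl(\alpha(x_u^{2}+x_v^{2})+2(1-\alpha)x_u x_v\Bigr).
\]

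Next I would analyse a single summand. The quadratic form $\alpha(s^{2}+t^{2})+2(1-\alpha)st$ in variables $s,t$ has matrix $\left(\begin{smallmatrix}\alpha & 1-\alpha\\ 1-\alpha & \alpha\end{smallmatrix}\right)$, whose eigenvalues are $1$ and $2\alpha-1$. Hence this $2\times2$ form is positive semi-definite precisely when $2\alpha-1\ge0$, i.e. when $\alpha\ge\tfrac12$; in that range each summand above is non-negative, so $x^{T}A_\alpha(G)x\ge0$, as required.

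An alternative, essentially equivalent route is to write, for $\alpha\ge\tfrac12$,
\[
A_\alpha(G)=(1-\alpha)\,Q(G)+(2\alpha-1)\,D(G),
\]
where $Q(G)=D(G)+A(G)$ is the signless Laplacian, and then invoke the standard fact that $Q(G)$ is positive semi-definite (this is itself the case $\alpha=\tfrac12$, and follows from $Q(G)=BB^{T}$ for the unsigned incidence matrix $B$, or from the edge-wise identity above with $\alpha=\tfrac12$) together with the trivial fact that $D(G)$, being a non-negative diagonal matrix, is positive semi-definite; a non-negative combination of positive semi-definite matrices is positive semi-definite. There is no serious obstacle here: the only points needing a moment's care are the degree bookkeeping in the first identity and the observation that $\alpha\ge\tfrac12$ is exactly the threshold making the per-edge $2\times2$ form positive semi-definite — for $\alpha<\tfrac12$ it already fails on a single edge, which is why the hypothesis cannot be weakened.
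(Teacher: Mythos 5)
Your proof is correct; both of your routes amount to the identity $x^{T}A_\alpha(G)x=(1-\alpha)\sum_{uv\in E(G)}(x_u+x_v)^2+(2\alpha-1)\sum_{i}d_ix_i^2$, which is exactly the standard argument from Nikiforov's paper. The present paper does not reprove this statement — it is quoted with a citation to \cite{1} — so there is nothing further to compare; your observation that $\alpha\geq\tfrac12$ is sharp (it already fails on $K_2$, whose $A_\alpha$-eigenvalues are $1$ and $2\alpha-1$) is a correct bonus.
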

\begin{theorem}\em \cite{2} \label{theorem 1.1}
(Interlacing Theorem) If $M$ is a real symmetric $n \times n$ matrix and let $\lambda_1(M)\geq \lambda_2(M) \geq \dots \lambda_n(M)$ denote its eigenvalues in non-increasing order. Suppose $A$ is a real symmetric $n \times n$ matrix and $B$ is a principal sub-matrix of $A$ with order $m \times m$. Then for $i\in \{1,\dots,m\}$, $\lambda_{n-m+i}(A) \leq \lambda_{i}(B) \leq \lambda_{i}(A)$. 
\end{theorem}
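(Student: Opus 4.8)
This is the classical Cauchy interlacing theorem, so I would prove it from the Courant--Fischer variational characterization of the eigenvalues together with a dimension count. Without loss of generality assume $B$ is the principal submatrix of $A$ indexed by the first $m$ coordinates, and identify $\R^m$ with the coordinate subspace $W=\{x\in\R^n:\ x_{m+1}=\dots=x_n=0\}$, so that every $x\in W$ corresponds to some $y\in\R^m$ with $x^{T}Ax=y^{T}By$ and $\|x\|=\|y\|$. Recall the Courant--Fischer formula
\[
\lambda_i(A)=\max_{\substack{S\subseteq\R^n\\ \dim S=i}}\ \min_{\substack{x\in S\\ x\neq 0}}\ \frac{x^{T}Ax}{x^{T}x},
\]
together with the dual ("min--max") form $\lambda_i(A)=\min_{\dim S=n-i+1}\max_{0\neq x\in S}\frac{x^{T}Ax}{x^{T}x}$, and the analogous formulas for $B$ over subspaces of $\R^m$.

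For the upper bound $\lambda_i(B)\le\lambda_i(A)$ I would argue that, under the identification above, each $i$-dimensional subspace of $\R^m$ is an $i$-dimensional subspace of $W\subseteq\R^n$; hence the family of subspaces over which the outer maximum for $B$ is taken is contained in the family for $A$, while the Rayleigh quotient of $A$ restricted to $W$ equals that of $B$, so $\lambda_i(B)\le\lambda_i(A)$. Equivalently, a quick dimension count works: let $U\subseteq W$ be spanned by the top $i$ eigenvectors of $B$ and $V\subseteq\R^n$ by the eigenvectors of $A$ for $\lambda_i(A),\dots,\lambda_n(A)$; since $\dim U+\dim V=i+(n-i+1)>n$ there is a nonzero $x\in U\cap V$, and then $\lambda_i(B)\le x^{T}Ax/x^{T}x\le\lambda_i(A)$.

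For the lower bound $\lambda_{n-m+i}(A)\le\lambda_i(B)$ I would simply apply the upper bound already established to the symmetric matrix $-A$ and its principal submatrix $-B$: since $\lambda_j(-M)=-\lambda_{N+1-j}(M)$ for an $N\times N$ matrix $M$, the inequality $\lambda_i(-B)\le\lambda_i(-A)$ becomes $-\lambda_{m+1-i}(B)\le-\lambda_{n+1-i}(A)$; substituting $i\mapsto m+1-i$ yields exactly $\lambda_{n-m+i}(A)\le\lambda_i(B)$. Alternatively one derives it directly from the min--max form in the same manner as the upper bound. I do not expect a genuine obstacle here, since the result is standard; the only delicate points are keeping the subspace-dimension bookkeeping in Courant--Fischer correct and getting the index shift $n-m+i$ in the lower bound right, which is the one spot where an off-by-one slip is easiest to make.
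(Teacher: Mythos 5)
Your proof is correct. Note that the paper offers no proof of this statement at all --- it is quoted from Brouwer and Haemers \cite{2} as a known classical result --- so there is nothing to compare against; your argument is the standard one from that reference: Courant--Fischer plus the dimension count $\dim U+\dim V=i+(n-i+1)>n$ for the upper bound, and the reduction to $-A$, $-B$ with the index substitution $i\mapsto m+1-i$ for the lower bound, and both the subspace bookkeeping and the shift $n-m+i$ come out right.
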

\begin{theorem}\cite{3}
Let $G$ be a connected graph with $n$ vertices and $m$ edges. Then
\begin{align*} \label{theorem 2}
 M_1 \geq \dfrac{4 m^2 }{n}+\dfrac{ (\Delta-\delta)^2}{2},
 \end{align*}
where equality holds if and only if $G$ is a regular graph.
\end{theorem}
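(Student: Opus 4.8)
The first move is to recast the claimed inequality purely as a statement about the degree sequence. Writing $\bar d=\frac{2m}{n}$ for the average degree and using $\sum_{i=1}^n d_i=2m$, expansion of $\sum_{i=1}^n(d_i-\bar d)^2$ yields the identity
$$\sum_{i=1}^n\big(d_i-\bar d\big)^2=\sum_{i=1}^n d_i^2-\frac{1}{n}\Big(\sum_{i=1}^n d_i\Big)^2=M_1-\frac{4m^2}{n}.$$
So the theorem is equivalent to $\sum_{i=1}^n(d_i-\bar d)^2\ge\frac{1}{2}(\Delta-\delta)^2$, and from this point only the elementary bounds $\delta\le d_i\le\Delta$ (hence also $\delta\le\bar d\le\Delta$) are used; no deeper structural property of $G$ enters the inequality itself.

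If $\Delta=\delta$ then $G$ is regular, each summand $d_i-\bar d$ is zero, and both sides equal $0$. Assume therefore $\Delta>\delta$. Then $G$ contains a vertex of degree $\Delta$ and a \emph{different} vertex of degree $\delta$; keeping only these two summands and dropping the remaining nonnegative ones gives
$$\sum_{i=1}^n(d_i-\bar d)^2\ \ge\ (\Delta-\bar d)^2+(\bar d-\delta)^2.$$
Putting $x=\Delta-\bar d\ge 0$ and $y=\bar d-\delta\ge 0$, so that $x+y=\Delta-\delta$, the elementary inequality $x^2+y^2\ge\frac{1}{2}(x+y)^2$ (which is just $(x-y)^2\ge 0$) completes the bound.

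For equality, the regular case is already visible above. For the converse I would track the two inequalities just used: equality forces $d_i=\bar d$ for every vertex outside the chosen extreme pair, and forces $x=y$, i.e.\ $\Delta-\bar d=\bar d-\delta$. The step I expect to be the genuine obstacle is then a \emph{graphicality} question: one must show that these numerical constraints, combined with $(d_1,\dots,d_n)$ being the degree sequence of a connected graph, actually force $\Delta=\delta$. I would want to check this with care, since the numerical constraints on their own are also satisfied by sequences such as $(\Delta,\tfrac{\Delta+\delta}{2},\dots,\tfrac{\Delta+\delta}{2},\delta)$; so either one argues that no non-regular such sequence arises in the relevant regime, or the equality clause has to be narrowed accordingly. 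Establishing that characterization precisely --- not the inequality, which is short --- is where the real work sits.
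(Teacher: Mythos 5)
The paper offers no proof of this statement---it is quoted from \cite{3}---so there is nothing internal to compare your argument against. Your proof of the inequality itself is correct and is the standard one: the identity $M_1-\frac{4m^2}{n}=\sum_{i=1}^{n}(d_i-\bar d)^2$ with $\bar d=\frac{2m}{n}$, discarding all summands except those of one vertex of degree $\Delta$ and one of degree $\delta$, and then $x^2+y^2\geq\frac{1}{2}(x+y)^2$. The ``if'' direction of the equality clause is also fine, since a regular graph makes both sides equal to $\frac{4m^2}{n}$.

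Your hesitation about the ``only if'' direction is not a technicality: that direction is false as stated, and the degree sequences you single out are realized by connected non-regular graphs. Take the tadpole on $n=5$ vertices obtained by attaching a path of length two to one vertex of a triangle. It is connected, has $m=5$ edges and degree sequence $(3,2,2,2,1)$, so $\Delta=3$, $\delta=1$, $\bar d=2$, and
\begin{align*}
M_1=9+4+4+4+1=22=\frac{4\cdot 5^2}{5}+\frac{(3-1)^2}{2},
\end{align*}
so equality holds although $G$ is not regular. What your two inequalities actually extract is the correct characterization: equality holds if and only if either $G$ is regular, or $G$ has exactly one vertex of degree $\Delta$, exactly one vertex of degree $\delta$, every other degree equal to $\frac{2m}{n}$, and $\Delta+\delta=\frac{4m}{n}$. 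So do not attempt to force $\Delta=\delta$ from the numerical constraints---it cannot be done---and note that the equality clause of the quoted theorem needs to be narrowed in exactly the way your proof suggests; only the sufficiency of regularity survives, which is the direction the rest of the paper actually relies on.
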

\begin{lemma}  \em \cite{9}\label{lemma8}
Let $G$ be a connected graph with $n$ vertices and $m$ edges. Then
\begin{align*}
M_1\leq \frac{4m^2}{n}+\frac{n}{4}(\Delta-\delta)^2,
\end{align*}
with equality if and only if $G$ is a regular graph.
\end{lemma}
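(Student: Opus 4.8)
Here the natural plan is to reduce the claim to an elementary bound on the degree sequence, viewed as $n$ real numbers lying in the interval $[\delta,\Delta]$. Put $\bar d=\tfrac{2m}{n}$ for the average degree and recall $\sum_{i=1}^n d_i=2m$. The exact identity $M_1=\sum_{i=1}^n d_i^2=\sum_{i=1}^n (d_i-\bar d)^2+\tfrac{4m^2}{n}$ shows that the lemma is equivalent to the variance-type estimate $\sum_{i=1}^n (d_i-\bar d)^2\le \tfrac{n}{4}(\Delta-\delta)^2$; this is the mirror image of the earlier lower bound $M_1\ge \tfrac{4m^2}{n}+\tfrac{(\Delta-\delta)^2}{2}$ quoted from \cite{3}, so the two proofs run in parallel.

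To prove the estimate I would first invoke the pointwise inequality $(\Delta-d_i)(d_i-\delta)\ge 0$, valid for every $i$ because $\delta\le d_i\le\Delta$. Expanding gives $d_i^2\le(\Delta+\delta)d_i-\Delta\delta$, and summing over $i$ (using $\sum d_i=2m$ once more) yields $M_1\le 2m(\Delta+\delta)-n\Delta\delta$. It then remains to check the purely algebraic inequality $2m(\Delta+\delta)-n\Delta\delta\le \tfrac{4m^2}{n}+\tfrac{n}{4}(\Delta-\delta)^2$. Transposing and using $\tfrac{n}{4}(\Delta-\delta)^2+n\Delta\delta=\tfrac{n}{4}(\Delta+\delta)^2$, the difference of the two sides equals $\tfrac{4m^2}{n}-2m(\Delta+\delta)+\tfrac{n}{4}(\Delta+\delta)^2=\bigl(\tfrac{2m}{\sqrt n}-\tfrac{\sqrt n}{2}(\Delta+\delta)\bigr)^2\ge 0$, which closes the argument. (Equivalently, this entire step is Popoviciu's variance inequality applied to the numbers $d_1,\dots,d_n$.)

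For the extremal case: if $G$ is $r$-regular then $\delta=\Delta=r$ and $2m=nr$, so both sides collapse to $nr^2$ and equality holds. Conversely, equality must propagate back through both estimates: the pointwise step forces $(\Delta-d_i)(d_i-\delta)=0$, i.e.\ $d_i\in\{\delta,\Delta\}$ for every $i$, and the algebraic step forces $\tfrac{2m}{\sqrt n}=\tfrac{\sqrt n}{2}(\Delta+\delta)$, i.e.\ $\bar d=\tfrac{\Delta+\delta}{2}$; together these pin the degree sequence down, and in particular $\delta=\Delta$, which is regularity. I do not anticipate a genuine obstacle — the proof is two one-line inequalities glued together — so the only care required is recognizing the perfect-square form in the algebraic step and checking that both inequalities are simultaneously tight exactly in the stated equality case.
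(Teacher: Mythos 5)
The paper does not give a proof of this lemma---it is quoted from \cite{9} in the preliminaries---so only your argument itself can be assessed. The inequality part of your proof is correct and complete: the identity $M_1=\sum_i(d_i-\bar d)^2+\tfrac{4m^2}{n}$, the pointwise bound $d_i^2\le(\Delta+\delta)d_i-\Delta\delta$ from $(\Delta-d_i)(d_i-\delta)\ge 0$, and the perfect-square form $\tfrac{4m^2}{n}-2m(\Delta+\delta)+\tfrac{n}{4}(\Delta+\delta)^2=\bigl(\tfrac{2m}{\sqrt n}-\tfrac{\sqrt n}{2}(\Delta+\delta)\bigr)^2$ all check out; this is indeed Popoviciu's inequality applied to the degree sequence.

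The gap is in the converse equality analysis. The two tightness conditions you extract---$d_i\in\{\delta,\Delta\}$ for every $i$, and $\bar d=\tfrac{\Delta+\delta}{2}$---do \emph{not} force $\delta=\Delta$. If $k$ vertices have degree $\Delta$ and $n-k$ have degree $\delta$, the second condition reads $(2k-n)(\Delta-\delta)=0$, so either $\Delta=\delta$ or $k=\tfrac{n}{2}$, and the latter case is realized by non-regular connected graphs. For $P_4$ one has $n=4$, $m=3$, degrees $1,2,2,1$, hence $M_1=10=\tfrac{4\cdot 9}{4}+\tfrac{4}{4}(2-1)^2$: equality holds although $P_4$ is not regular. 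So your assertion that the conditions ``pin the degree sequence down, and in particular $\delta=\Delta$'' is false, and the ``only if'' half of the stated equality condition cannot be salvaged because it is not true as written; the correct characterization is that every degree equals $\delta$ or $\Delta$ with exactly half the vertices attaining each value (regularity being the degenerate case $\delta=\Delta$). This discrepancy lies in the quoted statement itself, but your write-up should not paper over it at the step where it occurs.
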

\begin{lemma}\em \cite{20}
Let $G$ be a strongly regular graph with parameters $(n,r,a,c)$ and let $A$ be the adjacency matrix of $G$. Then
$A^2=rI+aA+c(J-I-A)$.
\end{lemma}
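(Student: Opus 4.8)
The plan is to verify the matrix identity entrywise by interpreting the $(i,j)$ entry of $A^2$ combinatorially. Recall that for the $0$--$1$ adjacency matrix $A=(a_{ij})$ of a simple graph, $(A^2)_{ij}=\sum_{k=1}^{n}a_{ik}a_{kj}$ counts the number of walks of length $2$ from $v_i$ to $v_j$; when $i\neq j$ this is exactly the number of common neighbours of $v_i$ and $v_j$, and when $i=j$ it equals the degree $d(v_i)$ (since $a_{ik}^2=a_{ik}$).

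First I would split into three exhaustive and mutually exclusive cases for an ordered pair $(i,j)$. If $i=j$, then $(A^2)_{ii}=\sum_k a_{ik}^2=\sum_k a_{ik}=d(v_i)=r$, using that $G$ is $r$-regular. If $i\neq j$ and $v_iv_j\in E(G)$, then by the defining property of a strongly regular graph the two adjacent vertices have exactly $a$ common neighbours, so $(A^2)_{ij}=a$. If $i\neq j$ and $v_iv_j\notin E(G)$, then the two non-adjacent vertices have exactly $c$ common neighbours, so $(A^2)_{ij}=c$.

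Next I would compute the corresponding entries of the right-hand side $rI+aA+c(J-I-A)$, noting that $J-I-A$ is precisely the adjacency matrix of the complement $\overline{G}$, i.e.\ its $(i,j)$ entry is $1$ when $i\neq j$ and $v_iv_j\notin E(G)$, and $0$ otherwise. Then on the diagonal we get $r\cdot 1+a\cdot 0+c\cdot 0=r$; for an adjacent pair $r\cdot 0+a\cdot 1+c\cdot 0=a$; and for a non-adjacent pair $r\cdot 0+a\cdot 0+c\cdot 1=c$. These coincide with the three values found above, so $A^2=rI+aA+c(J-I-A)$.

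There is essentially no genuine obstacle: the argument is a direct case check. The only point requiring mild care is the bookkeeping for the complement matrix $J-I-A$ and confirming that the diagonal, edge, and non-edge cases partition all ordered pairs $(i,j)$, which they clearly do.
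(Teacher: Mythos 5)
Your proof is correct and is essentially the standard argument for this cited lemma: an entrywise count of walks of length $2$, split into the diagonal, adjacent, and non-adjacent cases, matched against the entries of $rI+aA+c(J-I-A)$. Nothing further is needed.
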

\section{On $A_{\alpha}$-eigenvalues of a graph}

First, we obtain  the $A_\alpha$-eigenvalues of a strongly regular graph.
\begin{theorem}
If $G$ is a strongly regular graph with parameters $(n,r,a,c)$ having $m$ edges, then the $A_\alpha$-eigenvalues of $G$ are
\begin{enumerate}
\item[\bf (i)] $r$ with multiplicity 1.
\item[\bf (ii)] $\dfrac{\{2r\alpha+(1-\alpha)(a-c)\}+\sqrt{d}}{2}$ with multiplicity
$$\dfrac{1}{2}\left[ (n-1)+\dfrac{\{2r\alpha+(1-\alpha)(a-c)\}(n-1)+2(r-2\alpha m)}{\sqrt{d}}\right]$$
\item[\bf (iii)] $\dfrac{\{2r\alpha+(1-\alpha)(a-c)\}-\sqrt{d}}{2}$ with multiplicity
$$\dfrac{1}{2}\left[ (n-1)-\dfrac{\{2r\alpha+(1-\alpha)(a-c)\}(n-1)+2(r-2\alpha m)}{\sqrt{d}}\right]$$
\end{enumerate}
where $d =\{2r\alpha +(1-\alpha)(a-c)\}+4\{(r-c)(1-\alpha)^2-r\alpha (1-\alpha)(a-c)-r^2\alpha^2\}$.
\end{theorem}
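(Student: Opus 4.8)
The plan is to use the regularity of $G$ to reduce the whole computation to the classical adjacency spectrum of a strongly regular graph. Since $G$ is $r$-regular, $D(G)=rI$, so $A_\alpha(G)=\alpha D(G)+(1-\alpha)A(G)=\alpha rI+(1-\alpha)A(G)$. Hence, for $\alpha\in[0,1)$, $A_\alpha(G)$ and $A(G)$ have the same eigenvectors, and $p$ is an $A_\alpha$-eigenvalue of multiplicity $\mu$ precisely when $\lambda=\frac{p-\alpha r}{1-\alpha}$ is an $A(G)$-eigenvalue of multiplicity $\mu$ (the case $\alpha=1$ being the degenerate one $A_1(G)=rI$, recovered as the limit $d\to0$). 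Since $A_\alpha(G)\mathbf 1=r\mathbf 1$, the scalar $r$ is an $A_\alpha$-eigenvalue; by Lemma~\ref{Lemma 4} it equals the spectral radius $p_1$, and by Proposition~\ref{proposition1} it is simple, which gives part (i).

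For the remaining eigenvalues I would invoke the identity $A(G)^2=rI+aA(G)+c\bigl(J-I-A(G)\bigr)$ recorded at the end of Section~2. If $\lambda\neq r$ is an adjacency eigenvalue with eigenvector $v$, then $v$ is orthogonal to $\mathbf 1$, so $Jv=0$; applying the identity to $v$ gives $\lambda^2=r+a\lambda-c-c\lambda$, i.e. $\lambda^2-(a-c)\lambda-(r-c)=0$. Thus the only other adjacency eigenvalues are $\theta,\tau=\tfrac12\bigl[(a-c)\pm\sqrt{(a-c)^2+4(r-c)}\,\bigr]$, with corresponding $A_\alpha$-eigenvalues $\alpha r+(1-\alpha)\theta$ and $\alpha r+(1-\alpha)\tau$. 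A short expansion shows that $(1-\alpha)^2\bigl[(a-c)^2+4(r-c)\bigr]$ is exactly the quantity $d$ of the statement (here the first summand in the printed expression for $d$ should be $\{2r\alpha+(1-\alpha)(a-c)\}^2$), so that $\alpha r+(1-\alpha)\theta=\tfrac12\bigl[\{2r\alpha+(1-\alpha)(a-c)\}+\sqrt d\,\bigr]$ and similarly with $-\sqrt d$ for $\tau$; this is the eigenvalue content of (ii) and (iii).

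It remains to identify the multiplicities $m_\theta,m_\tau$ of $\theta,\tau$. These solve the linear system $m_\theta+m_\tau=n-1$ and $m_\theta\theta+m_\tau\tau=\operatorname{tr}A(G)-r=-r$ (equivalently, using $\operatorname{tr}A_\alpha(G)=2\alpha m$ from Lemma~\ref{lemma 2}(i) together with $2m=nr$). Solving the system and substituting $\theta-\tau=\sqrt{(a-c)^2+4(r-c)}$ yields $m_\theta,m_\tau=\tfrac12\Bigl[(n-1)\mp\frac{2r+(n-1)(a-c)}{\sqrt{(a-c)^2+4(r-c)}}\Bigr]$; multiplying numerator and denominator of the fraction by $1-\alpha$ and re-expressing $r$ through $m$ and $n$ puts these into the form displayed in (ii) and (iii). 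A useful self-check is that the multiplicities cannot depend on $\alpha$, which forces the factor $1-\alpha$ in the numerator to cancel the $1-\alpha$ inside $\sqrt d$.

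There is no conceptual obstacle; the work is purely algebraic bookkeeping. The two things that need care are the discriminant simplification $d=(1-\alpha)^2[(a-c)^2+4(r-c)]$ and the correct pairing of $\theta$ and $\tau$ with their multiplicities (the sign of the square-root term), which is quickly verified against a small case such as the Petersen graph, with $(n,r,a,c)=(10,3,0,1)$ and adjacency spectrum $\{3^{1},1^{5},(-2)^{4}\}$.
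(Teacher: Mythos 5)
Your proof is correct, and it takes a cleaner route than the paper's. The paper squares $A_\alpha$ directly, expands $(\alpha D+(1-\alpha)A)^2$ using $DA=rA$ and the relation $A^2=rI+aA+c(J-I-A)$, and extracts a quadratic satisfied by every $A_\alpha$-eigenvalue other than $r$; you instead observe at the outset that regularity gives $A_\alpha=\alpha rI+(1-\alpha)A$, so the whole problem pulls back to the classical adjacency spectrum of a strongly regular graph via the affine map $\lambda\mapsto\alpha r+(1-\alpha)\lambda$. The two computations are algebraically equivalent (the paper's step of rewriting $A$ as $\frac{1}{1-\alpha}(A_\alpha-\alpha D)$ is your affine relation in disguise), but your reduction buys two things: the discriminant identity $d=(1-\alpha)^2\bigl[(a-c)^2+4(r-c)\bigr]$ becomes transparent rather than a miracle of cancellation, and the $\alpha$-independence of the multiplicities is automatic since they are just the classical SRG multiplicities. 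Your care on the two delicate points also pays off: the theorem's printed $d$ is indeed missing the square on its first summand, exactly as you say; and your Petersen check $\{3^1,1^5,(-2)^4\}$ shows that your pairing $m_\theta=\tfrac12\bigl[(n-1)-\tfrac{2r+(n-1)(a-c)}{\sqrt{(a-c)^2+4(r-c)}}\bigr]$ is the correct one, whereas the signs displayed in the theorem (and at the end of the paper's proof) attach the two multiplicities to the wrong eigenvalues --- at $\alpha=0$ the stated formula would give the larger root $\theta_1=1$ multiplicity $4$ instead of $5$. The only cosmetic gap is your appeal to Proposition 2.7 for the simplicity of $r$, which requires $G$ connected (i.e.\ $c>0$); the paper makes the same implicit assumption, so nothing is lost.
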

\begin{proof} Throughout the proof, we use $A_\alpha$ and $D$ to denote the $A_\alpha(G)$ and $D(G)$ respectively. We have
\begin{align*}
 A_\alpha^2 &=(\alpha D+(1-\alpha)A)^2 = \alpha^2 D^2+(1-\alpha)^2 A^2+ 2\alpha (1-\alpha)DA\\
&=\alpha^2 r^2 I+(1-\alpha)^2 [rI+aA+c(J-I-A)]+2r\alpha(1-\alpha)A\\
&=\left\lbrace\alpha^2 r^2+(1-\alpha)^2 r-c(1-\alpha)^2 \right\rbrace I \\
& +\left\lbrace (1-\alpha)^2a+2r\alpha (1-\alpha)-c(1-\alpha)^2 \right\rbrace \frac{1}{(1-\alpha)}\left\lbrace A_\alpha-\alpha D \right\rbrace+c(1-\alpha)^2J.
\end{align*}
This implies that 
$$A_\alpha^2 -\left\lbrace 2r\alpha+(1-\alpha)(a-c)  \right\rbrace A_\alpha -\left\lbrace (r-c)(1-\alpha)^2-r\alpha (1-\alpha)(a-c)-r^2\alpha^2 \right\rbrace I=c(1-\alpha)^2J.$$
Since $G$ is $r$-regular, so $r$ is an eigenvalue of $A_\alpha$ with eigenvector $\textbf{1}$ (i.e., $n\times 1$ column vector with all entries 1). Let $z$ be another eigenvector of $A_\alpha$ corresponding to eigenvalue $\theta \neq r$ such that $A_\alpha \theta=\theta z$. Then
 \begin{align*}
 & A_\alpha^2 z -\left\lbrace 2r\alpha+(1-\alpha)(a-c)  \right\rbrace A_\alpha z -\left\lbrace (r-c)(1-\alpha)^2-r\alpha (1-\alpha)(a-c)-r^2\alpha^2 \right\rbrace Iz \\ & =c(1-\alpha)^2Jz=0
 \end{align*}
 Therefore, as $z\neq 0$, we have
 \begin{align}
 \theta^2  -\left\lbrace 2r\alpha+(1-\alpha)(a-c)  \right\rbrace \theta -\left\lbrace (r-c)(1-\alpha)^2-r\alpha (1-\alpha)(a-c)-r^2\alpha^2 \right\rbrace =0
 \end{align}
 Thus, the remaining eigenvalues of $A_\alpha$ distinct from $r$ which satisfy the equation
 \begin{align*}
 x^2 -\left\lbrace 2r\alpha+(1-\alpha)(a-c)  \right\rbrace x -\left\lbrace (r-c)(1-\alpha)^2-r\alpha (1-\alpha)(a-c)-r^2\alpha^2 \right\rbrace =0
 \end{align*}
 are  $\dfrac{\{2r\alpha+(1-\alpha)(a-c)\}+\sqrt{d}}{2}=\theta_1$ (say) and $\dfrac{\{2r\alpha+(1-\alpha)(a-c)\}-\sqrt{d}}{2}=\theta_2$ (say), where the discriminant of the quadratic equation is $$d=\{2r\alpha +(1-\alpha)(a-c)\}+4\{(r-c)(1-\alpha)^2-r\alpha (1-\alpha)(a-c)-r^2\alpha^2\}.$$
 \par  Using these given parameters for a strongly regular graph, the multiplicities of $\theta_1$ and $\theta_2$ can be attained. If $m_1$ and $m_2$ are the multiplicities of $\theta_1$ and $\theta_2$, then
 $m_1+m_2+1=n$ and $m_1\theta_1+m_2\theta_2+r=2\alpha m$. 
 Solving these two equations and replacing $\theta_1$ and $\theta_2$, we get
 $$m_1=\dfrac{1}{2}\left[ (n-1)+\dfrac{\{2r\alpha+(1-\alpha)(a-c)\}(n-1)+2(r-2\alpha m)}{\sqrt{d}}\right]$$ and
$$m_2=\dfrac{1}{2}\left[ (n-1)-\dfrac{\{2r\alpha+(1-\alpha)(a-c)\}(n-1)+2(r-2\alpha m)}{\sqrt{d}}\right].$$
 \end{proof}
For example, $A_\alpha$-eigenvalues of the strongly regular graph $(5,2,0,1)$, that is, of the cycle of length $5$, are $2, \dfrac{1}{2} \left(-1+5\alpha-\sqrt{5}\sqrt{1-2\alpha+\alpha^2} \right),\dfrac{1}{2} \left(-1+5\alpha+\sqrt{5}\sqrt{1-2\alpha+\alpha^2} \right)$ with multiplicities $1,2,2$ respectively.
\begin{theorem}
Let $G$ be a graph with degree sequence $\{d_1,\dots,d_n\}$. For $0\leq \alpha <1$, the $A_\alpha$-spectral radius of $G$ satisfies $\delta \leq p_1\leq \Delta$. If $G$ is connected, then equality in either case holds if and only if $G$ is regular.
\end{theorem}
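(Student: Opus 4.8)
The plan is to argue via a Perron eigenvector of $A_\alpha(G)$. Since $0\le\alpha<1$, the matrix $A_\alpha(G)=\alpha D(G)+(1-\alpha)A(G)$ is entrywise nonnegative and real symmetric, so its largest eigenvalue $p_1$ equals its spectral radius and carries a nonzero nonnegative eigenvector $x=(x_1,\dots,x_n)^T$ with $A_\alpha(G)x=p_1x$ (Perron--Frobenius). The engine for both inequalities is the $i$-th coordinate of this equation,
\[ p_1 x_i=\alpha d_i x_i+(1-\alpha)\!\!\sum_{v_j\sim v_i}\!\! x_j,\qquad i=1,\dots,n. \]

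For the upper bound I would choose an index $k$ with $x_k=\max_i x_i>0$, replace every $x_j$ in the neighbour sum by $x_k$, and divide by $x_k$ to obtain $p_1\le\alpha d_k+(1-\alpha)d_k=d_k\le\Delta$; note this needs no connectedness. For the lower bound, when $G$ is connected the matrix $A_\alpha(G)$ is irreducible — its off-diagonal positivity pattern is that of the connected graph $G$, since $1-\alpha>0$ — so the Perron eigenvector may be taken strictly positive (cf.~\cite{1}); then I would pick $l$ with $x_l=\min_i x_i>0$, bound the neighbour sum below by $d_l x_l$, and divide to get $p_1\ge d_l\ge\delta$. For a possibly disconnected $G$ the lower bound follows by applying this inside the component of $G$ whose $A_\alpha$-matrix achieves $p_1$, using that degrees of vertices in a component coincide with their degrees in $G$.

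For the equality discussion I would assume $G$ connected. If $p_1=\Delta$, then every inequality in the chain $p_1 x_k\le d_k x_k\le\Delta x_k$ must be an equality, which forces $d_k=\Delta$ and $\sum_{v_j\sim v_k}x_j=d_k x_k$; since each $x_j\le x_k$, this means $x_j=x_k$ for every neighbour $v_j$ of $v_k$, so the set of vertices at which $x$ attains its maximum is nonempty and closed under adjacency, hence equal to $V(G)$ by connectedness, i.e.\ $x$ is a constant vector. Substituting $x=c\mathbf{1}$ back into $A_\alpha(G)x=p_1x$ then yields $d_i=p_1=\Delta$ for all $i$, so $G$ is regular; the case $p_1=\delta$ is entirely symmetric, with the minimum of $x$ in place of its maximum. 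Conversely, if $G$ is $r$-regular then $A_\alpha(G)\mathbf{1}=r\mathbf{1}$, so $r$ is an $A_\alpha$-eigenvalue, and the bound already proved gives $r\le\Delta=r$, forcing $p_1=r=\delta=\Delta$ and equality in both places.

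I expect the only delicate point to be this propagation step — upgrading ``$x$ is constant on the closed neighbourhood of an extremal vertex'' to ``$x$ is constant everywhere'' — which is precisely where connectedness is used and what ultimately forces all the degrees to be equal; the remaining steps are routine coordinatewise bookkeeping. A useful sanity check and alternative route is to note that $d_i$ is exactly the $i$-th row sum of $A_\alpha(G)$, so the two bounds also follow from the classical estimate $\min_i r_i\le\rho(M)\le\max_i r_i$ for a nonnegative matrix $M$ with row sums $r_i$.
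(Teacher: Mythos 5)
Your proposal is correct and follows essentially the same route as the paper: the coordinatewise eigenvector identity $p_1 x_k=\alpha d_k x_k+(1-\alpha)\sum_{j\sim k}x_j$, bounded above by $\Delta x_k$ and below by $\delta x_k$. If anything, your version is the more careful one --- you evaluate at a maximizing (resp.\ minimizing) coordinate, which is what actually justifies $\sum_{j\sim k}x_j\le d_k x_k$, and you propagate the equality condition through the whole graph via connectedness, whereas the paper asserts the inequalities for arbitrary $k$ and treats the converse only under the simultaneous hypothesis $p_1=\delta=\Delta$.
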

\begin{proof}
Let $x=(x_1, x_2, \dots, x_n)^T$ be a non-negative eigenvector corresponding to $p_1$. Since $(\alpha D+(1-\alpha)A)x=\lambda x$, therefore, for $1\leq k\leq n$, we have
\begin{align*}
p_1 x_k &=\alpha d_k x_k+(1-\alpha)\sum_{k \sim j} x_j \\
& \leq    \alpha \Delta x_k+(1-\alpha)\Delta x_k=\Delta x_k.
\end{align*} Similarly,
\begin{align*}
p_1 x_k &=\alpha d_k x_k+(1-\alpha)\sum_{k \sim j} x_j \\
& \geq   \alpha \delta x_k+(1-\alpha)\delta x_k=\delta x_k.
\end{align*}
That is, $\delta x \leq p_1 x \leq \Delta x$ and therefore, by Proposition \ref{proposition2}, we get $\delta \leq p_1 \leq \Delta$. If $G$ is connected and $r$-regular, then $p_1=r=\delta=\Delta$. Conversely, if $p_1=\delta=\Delta$, then
\begin{align*}
\delta x_k &=\alpha d_k x_k+(1-\alpha)\sum_{k \sim j} x_j=\Delta x_k, \,\, 1\leq k\leq n
\end{align*}
which gives $x_1=x_2=\dots=x_n$ and $d_1=d_2=\dots=d_n$. Therefore, $G$ is a regular graph.
\end{proof}
Let $G$ be a graph with $A_\alpha$-eigenvalues $\beta, \gamma, \tau$ with multiplicities say $l, m, k$ respectively. Throughout our work it is represented using the notation $\{[\beta]^l, [\gamma]^m, [\tau]^k\}$. The set of eigenvalues of $A_\alpha(G)$ called spectrum of $A_\alpha(G)$ or $A_\alpha$-spectrum is denoted by $Spec~(A_\alpha(G))$.
\begin{theorem} \label{theorem 1}
Let $G$ be a connected $r$- regular graph with $n$ vertices and $m$ edges. For $\alpha \in [0,1)$, if $Spec~(A_\alpha(G))$=$\{ r, [r \alpha +(1-\alpha)]^a, [r \alpha -(1-\alpha)]^b \}$ for some non-negative integers $a$ and $b$, then $a=0$, $b=r=n-1$ and $G\cong K_{r+1} $.
\end{theorem}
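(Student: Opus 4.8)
The plan is to use regularity to transfer the problem to the ordinary adjacency spectrum and then finish with two trace identities plus one structural fact. Since $G$ is $r$-regular, $D(G)=rI$, so $A_\alpha(G)=\alpha rI+(1-\alpha)A(G)$; because $\alpha\in[0,1)$, the affine map $\lambda\mapsto\alpha r+(1-\alpha)\lambda$ is a multiplicity-preserving bijection sending $r\mapsto r$, $1\mapsto r\alpha+(1-\alpha)$, and $-1\mapsto r\alpha-(1-\alpha)$. Hence the hypothesis is equivalent to saying that the adjacency spectrum of $G$ is $\{\,r,\ [1]^a,\ [-1]^b\,\}$, where $r$ has multiplicity $1$ (automatic for a connected regular graph, cf.\ Proposition \ref{proposition1}) and an eigenvalue of multiplicity $0$ is understood to be absent.

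Next I would record the elementary constraints: counting eigenvalues gives $n=1+a+b$; $tr(A(G))=0$ gives $r+a-b=0$; and $tr(A(G)^2)=2m=nr$ gives $r^2+a+b=nr$ (equivalently, apply Lemma \ref{lemma 2}(i)--(ii) to $A_\alpha$ using $M_1=nr^2$ and $2m=nr$). From the first two, $b=r+a$ and $n=1+2a+r$; substituting into the third and cancelling leaves $a(1-r)=0$, so $a=0$ or $r=1$.

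If $r=1$, then $n=1+2a+1=2+2a$, whereas a connected $1$-regular graph has $n=2$; hence $a=0$, $b=1$, and $G\cong K_2=K_{r+1}$ with $b=r=n-1$. If instead $a=0$, then $b=r$ and the adjacency spectrum of $G$ is $\{\,r,\ [-1]^{\,r}\,\}$. For $r=0$ this forces $n=1$ and $G\cong K_1=K_{r+1}$; for $r\ge 1$, $G$ is connected with exactly two distinct adjacency eigenvalues, which forces $G$ to be complete --- from $(A(G)-rI)(A(G)+I)=0$ one reads off that any two non-adjacent vertices have no common neighbour, impossible in a connected non-complete graph (which contains two vertices at distance exactly $2$). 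Thus $G\cong K_n$, and comparing with Lemma \ref{lemma 1} (or the classical spectrum of $K_n$) gives $r=n-1$ and that $-1$ has multiplicity $n-1$; that is, $a=0$, $b=r=n-1$, and $G\cong K_{r+1}$.

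Essentially all of this is routine linear algebra; the one genuinely structural input is the last step, identifying a connected graph with two distinct adjacency eigenvalues as a complete graph, and the one thing needing care is the bookkeeping that confirms the boundary cases ($r\in\{0,1\}$, or $a$ or $b$ equal to $0$) are all genuine instances of the stated conclusion. I expect those to be the only subtle points.
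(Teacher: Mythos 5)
Your proof is correct and takes essentially the same route as the paper: an eigenvalue count combined with the first and second moment identities (your reduction to the adjacency spectrum via $\lambda\mapsto\alpha r+(1-\alpha)\lambda$ is just Lemma \ref{lemma 2}(i)--(ii) in disguise, as you yourself note). If anything you are more careful than the paper, which passes from the two equations directly to $a=0$, $b=r=n-1$ without mentioning the spurious root $r=1$ of the resulting quadratic, a case you dispose of explicitly.
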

\begin{proof}
Here $1+a+b=n$ and using Lemma \ref{lemma 2}, we get
\begin{align*}
r+a\{r\alpha+(1-\alpha)\}+b\{r\alpha-(1-\alpha)\}=\alpha n r\\
r^2+a \{r\alpha+(1-\alpha)\}^2+b\{ r\alpha-(1-\alpha)\}^2=\alpha^2 n r^2+(1-\alpha)^2 nr
\end{align*}
and hence after simplifying these equations, we obtain $a=0$ and $b=r=n-1$. Therefore, spectrum of $A_\alpha(G)$ reduces to $\{ n-1, [n\alpha-1]^{n-1}\}$. Hence $G\cong K_{r+1}=K_{\frac{2m}{n}+1}$.
\end{proof}
\begin{theorem}\label{theorem 2}
For $0\leq \alpha<1$, a simple connected graph $G$ with order $n \geq 2$ has exactly two $A_{\alpha}$-eigenvalues if and only if $G$ is isomorphic to complete graph $K_n$.
\end{theorem}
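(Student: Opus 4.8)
The plan is to prove the two implications separately; the reverse implication ($G\cong K_n\Rightarrow$ exactly two $A_\alpha$-eigenvalues) is immediate from Lemma~\ref{lemma 1}, which gives $Spec(A_\alpha(K_n))=\{\,n-1,\ [\alpha n-1]^{\,n-1}\,\}$, and $n-1\neq\alpha n-1$ precisely because $\alpha<1$. So the real content is the forward direction.

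Assume $G$ is connected on $n\geq 2$ vertices with exactly two distinct $A_\alpha$-eigenvalues; call them $p_1>p_n$ (by Proposition~\ref{proposition1} the larger has multiplicity $1$ and the smaller multiplicity $n-1$, though this will not actually be needed). Since $A_\alpha(G)$ is a real symmetric matrix with exactly two distinct eigenvalues, it is annihilated by $(x-p_1)(x-p_n)$, i.e.
\[
A_\alpha^2=(p_1+p_n)A_\alpha-p_1p_nI .
\]
Next I would write $A_\alpha^2=\alpha^2D^2+\alpha(1-\alpha)(DA+AD)+(1-\alpha)^2A^2$ and read off the $(i,j)$-entry of the displayed identity for a pair of \emph{non-adjacent} vertices $v_i,v_j$. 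Off the diagonal the $D^2$ term vanishes, the $(DA+AD)$ term contributes $(d_i+d_j)a_{ij}=0$, and $(A^2)_{ij}$ equals the number $t_{ij}$ of common neighbours of $v_i$ and $v_j$; the right-hand side contributes $(p_1+p_n)(1-\alpha)a_{ij}=0$. Hence $(1-\alpha)^2t_{ij}=0$, and since $\alpha<1$ this forces $t_{ij}=0$: in $G$ no two non-adjacent vertices share a common neighbour.

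It then remains to combine this with connectivity. If $G$ were not complete then, being connected, it would contain two vertices at distance exactly $2$; such a pair is non-adjacent yet has a common neighbour, contradicting the previous step. Therefore $G\cong K_n$, which finishes the proof.

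The argument is short and the only genuine subtlety is bookkeeping: one must expand $A_\alpha^2$ correctly and notice that the diagonal comparison (which reads $\alpha^2d_i^2+(1-\alpha)^2d_i=(p_1+p_n)\alpha d_i-p_1p_n$) is not needed, only the off-diagonal one for non-adjacent pairs. The hypothesis $\alpha<1$ is used in exactly one place — cancelling the factor $(1-\alpha)^2$ — and it is essential, since for $\alpha=1$ one has $A_\alpha=D$, so every graph with two distinct vertex degrees would have two $A_\alpha$-eigenvalues.
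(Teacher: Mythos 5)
Your proof is correct, and it takes a genuinely different route from the paper's. The paper argues by contradiction via the Interlacing Theorem: it picks a non-adjacent pair $u,v$, whose induced $2\times 2$ principal submatrix $\mathrm{diag}(\alpha d(u),\alpha d(v))$ forces the smaller eigenvalue $\gamma$ of $A_\alpha(G)$ to equal $\alpha d(v)$, then picks a neighbour $u'$ of $v$ and shows that the smaller eigenvalue of the principal submatrix on $\{u',v\}$ would also have to equal $\alpha d(v)$, which after squaring yields $4(1-\alpha)^2=0$, a contradiction. You instead use the minimal-polynomial identity $A_\alpha^2=(p_1+p_n)A_\alpha-p_1p_nI$ (valid because a real symmetric matrix with two distinct eigenvalues is diagonalizable with quadratic minimal polynomial) and compare $(i,j)$-entries at a non-adjacent pair: your bookkeeping is right, since $D^2$ and $p_1p_nI$ contribute nothing off the diagonal, $(DA+AD)_{ij}=(d_i+d_j)a_{ij}=0$, and $(A_\alpha)_{ij}=(1-\alpha)a_{ij}=0$, leaving $(1-\alpha)^2t_{ij}=0$ and hence $t_{ij}=0$ for $\alpha<1$; connectivity plus incompleteness would then produce a pair at distance exactly $2$, a contradiction. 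This is the classical ``two distinct adjacency eigenvalues'' argument transplanted to $A_\alpha$. What it buys you: it avoids interlacing entirely, isolates the single place where $\alpha<1$ is used, and as a by-product shows that \emph{any} graph (connected or not) whose $A_\alpha$-matrix has exactly two distinct eigenvalues is $P_3$-free, i.e., a disjoint union of cliques, after which connectivity finishes instantly. The paper's interlacing argument is more local (it never needs the global annihilating polynomial) and matches the technique the authors use elsewhere, but your version is shorter and, if anything, cleaner.
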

\begin{proof}
If $G \cong K_n$, then by Lemma \ref{lemma 1}, the $A_\alpha$-spectrum of $K_n$ is $\{ [n-1]^1, [\alpha n-1]^{n-1}\}$ and hence the sufficient part automatically holds.\\
Conversely, if possible let $G$ posses two distinct eigenvalues say $\beta$ and $\gamma$ ( where $\beta > \gamma$ without loss of generality), and is not complete. This hypothesis is not at all true for $n=2$ and hence result holds trivially. For $n>2$, connectedness of $G$ guarantees that $Spec~(A_\alpha(G))$=$\{[\beta]^1, [\gamma]^{n-1}\}$. Since $G$ is not complete, it contains at least one pair of distinct vertices $u, v \in V(G)$, where $d(u)\geq d(v)$, but $uv \notin E(G)$. Therefore, the vertices $u$ and $v$ induces a principal submatrix of the form $M=\begin{bmatrix}
\alpha d(u) & 0\\
0 & \alpha d(v)
\end{bmatrix}
$ that will be contained in $A_\alpha(G)$. Therefore, by Interlacing theorem \ref{theorem 1.1}, we have
$\gamma =\lambda_2(M)=\alpha d(v).$ Also, as $G$ is connected, there exist some $u' \in V(G)$ such that $u'v \in E(G)$. Thus we can have another principal submatrix $M'$ of $A_\alpha(G)$ induced by $u'$ and $v$, where $M'=\begin{bmatrix}
\alpha d(v) & 1-\alpha \\
1-\alpha & \alpha d(u')
\end{bmatrix}.$
\par Now, the characteristic polynomial of $M'$ is
\begin{align*}
\lambda^2-\lambda \alpha \{d(v)+d(u')\}+ \alpha^2 d(v)d(u')-(1-\alpha)^2=0
\end{align*}
This implies that
\begin{align*}
\lambda=\dfrac{\alpha (d(v)+d(u')) \pm \sqrt{\alpha^2 \{d(v)+d(u')\}^2-4 \alpha^2d(v)d(u')}}{2}.
\end{align*}
From Interlacing theorem, we get $\lambda_n(A_\alpha(G))\leq \lambda_2 (M')\leq \lambda_2 (A_\alpha(G))$.
Therefore
\begin{align*}
 \alpha d(v)=\dfrac{\alpha (d(v)+d(u')) - \sqrt{\alpha^2 \{d(v)+d(u')\}^2-4 \alpha^2d(v)d(u')}}{2},
\end{align*}
implying that
\begin{align*}
\alpha^2 (d(v)-d(u'))^2+4(1-\alpha)^2=\alpha^2 (d(u')-d(v))^2,
\end{align*}
which is a contradiction unless $\alpha=1$ and hence the proof.
\end{proof}
\section{$A_{\alpha}$-energy of graphs}
We start this section with a modification to Theorem 3.1 in \cite{4}. Let $s_i=|p_i-\dfrac{2\alpha m}{n}|, i\in \{1,\cdots, n\}$ such that $s_1\geq s_2\geq \cdots \geq s_n\geq 0$. Therefore we can write the $A_\alpha$-energy of $G$ as $E(A_\alpha(G))=\sum_{i=1}^{n}s_i$.
\begin{theorem} \textbf{(Some modification of Theorem 3. 1 in \cite{4} )}\label{theorem2.1}
If $G$ is a connected graph of order $n \geq 2$ and edges $m\geq 1$, then for $\alpha \in [0,1)$, we have
\begin{align*}
E(A_\alpha(G))> \sqrt{2\left(\alpha^2M_1+(1-\alpha)^22m-\dfrac{4\alpha^2 m^2}{n}\right)},
\end{align*}
where equality can not be attained for connected graph. 
\end{theorem}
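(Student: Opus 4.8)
The plan is to phrase everything through the auxiliary eigenvalues $\eta_i=p_i-\frac{2\alpha m}{n}$. By Lemma \ref{lemma 2}(i) we have $\sum_{i=1}^n\eta_i=0$, and by Lemma \ref{lemma 2}(iii), $\sum_{i=1}^n\eta_i^2=\alpha^2M_1+(1-\alpha)^22m-\frac{4\alpha^2m^2}{n}$, which is exactly the quantity under the square root. Since $E(A_\alpha(G))=\sum_{i=1}^n|\eta_i|=\sum_{i=1}^n s_i$, the statement is equivalent to $E(A_\alpha(G))^2>2\sum_{i=1}^n\eta_i^2$. I would first record that $m\ge 1$ and $\alpha<1$ prevent $A_\alpha(G)$ from being the scalar matrix $\frac{2\alpha m}{n}I$ (its off-diagonal entries $(1-\alpha)a_{ij}$ cannot all vanish), so not all $\eta_i$ are zero and the right-hand side is strictly positive.

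Next I would split the indices by the sign of $\eta_i$ and set $t=\sum_{\eta_i>0}\eta_i$. Because $\sum_i\eta_i=0$, we also get $t=\sum_{\eta_i<0}|\eta_i|$, hence $E(A_\alpha(G))=\sum_i|\eta_i|=2t$. Applying the elementary inequality $\sum a_j^2\le(\sum a_j)^2$ for non-negative reals to the positive and the negative parts separately,
\[
\sum_{i=1}^n\eta_i^2=\sum_{\eta_i>0}\eta_i^2+\sum_{\eta_i<0}\eta_i^2\le\Big(\sum_{\eta_i>0}\eta_i\Big)^2+\Big(\sum_{\eta_i<0}|\eta_i|\Big)^2=2t^2=\tfrac12\,E(A_\alpha(G))^2,
\]
which rearranges to $E(A_\alpha(G))\ge\sqrt{2(\alpha^2M_1+(1-\alpha)^22m-\frac{4\alpha^2m^2}{n})}$. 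Equality forces equality in each of the two uses of $\sum a_j^2\le(\sum a_j)^2$, i.e. at most one $\eta_i$ positive and at most one negative (hence, by the previous paragraph, exactly one of each). So equality holds precisely when $A_\alpha(G)$ has a unique eigenvalue above $\frac{2\alpha m}{n}$, a unique one below, and the value $\frac{2\alpha m}{n}$ as an eigenvalue of multiplicity $n-2$.

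The remaining — and, I expect, genuinely delicate — step is to exclude this degenerate $A_\alpha$-spectrum for connected graphs. A natural attempt is to combine Proposition \ref{proposition1} (simplicity of the $A_\alpha$-spectral radius) with an interlacing argument (Theorem \ref{theorem 1.1}) on suitable $2\times2$ principal submatrices, in the spirit of the proof of Theorem \ref{theorem 2}. However, I am not convinced the strict form survives without an extra hypothesis: for an $r$-regular graph whose adjacency spectrum is $\{r,[0]^{n-2},[-r]^1\}$ — for instance $K_{s,s}$, in particular $C_4$ — one computes $Spec(A_\alpha(G))=\{r,[\alpha r]^{n-2},[(2\alpha-1)r]^1\}$ with $\frac{2\alpha m}{n}=\alpha r$, so the auxiliary spectrum is exactly $\{(1-\alpha)r,[0]^{n-2},-(1-\alpha)r\}$ and equality is attained for every $\alpha\in[0,1)$. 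Thus the honest deliverable of this plan is the non-strict bound $E(A_\alpha(G))\ge\sqrt{2(\alpha^2M_1+(1-\alpha)^22m-\frac{4\alpha^2m^2}{n})}$ together with the equality characterization above, and obtaining the strict inequality seems to require assuming, e.g., that $G$ is not a balanced complete bipartite graph.
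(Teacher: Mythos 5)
Your proof of the non-strict bound is correct, and your refusal to deliver the strict inequality is justified: the theorem as stated is false, and your counterexample is valid. For $C_4$ (or any $K_{s,s}$) one has $n=2s$, $m=s^2$, $M_1=2s^3$, so the right-hand side equals $\sqrt{2\left(2\alpha^2s^3+2(1-\alpha)^2s^2-2\alpha^2s^3\right)}=2s(1-\alpha)$, while the auxiliary spectrum is $\{(1-\alpha)s,[0]^{n-2},-(1-\alpha)s\}$, giving $E(A_\alpha(K_{s,s}))=2s(1-\alpha)$ exactly. So equality \emph{is} attained by a connected graph for every $\alpha\in[0,1)$. Your route to the inequality (splitting $\eta_i$ by sign, writing $E=2t$ with $t=\sum_{\eta_i>0}\eta_i$, and using $\sum a_j^2\le(\sum a_j)^2$ on each part) differs cosmetically from the paper's, which expands $E^2=\sum s_i^2+2\sum_{i<j}s_is_j$ and applies the triangle inequality to the cross terms, using $2\sum_{i<j}\eta_i\eta_j=-\sum_i\eta_i^2$; both yield $E^2\ge 2\sum_i\eta_i^2$. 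The advantage of your version is that it produces the correct equality condition transparently.

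The paper's error is precisely in the equality analysis you flagged. It asserts that equality in the step $\sum_{i<j}|\eta_i\eta_j|\ge\bigl|\sum_{i<j}\eta_i\eta_j\bigr|$ holds if and only if all $\eta_i\ge 0$, and then derives a contradiction with $\sum_i\eta_i=0$. But equality in that triangle inequality only requires all the products $\eta_i\eta_j$ ($i<j$) to have the same sign; since their sum equals $-\frac12\sum_i\eta_i^2<0$, the relevant case is that all products are $\le 0$, which happens exactly when at most one $\eta_i$ is positive and at most one is negative, the rest being zero --- your characterization. The paper overlooks this branch entirely, and $K_{s,s}$ realizes it. So the correct statement is the non-strict inequality together with your equality description (one eigenvalue above $\frac{2\alpha m}{n}$, one below, and $\frac{2\alpha m}{n}$ with multiplicity $n-2$); strictness would require excluding such graphs by hypothesis, not by the argument given.
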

\begin{proof} As $s_i=|p_i-\dfrac{2\alpha m}{n}|, i\in \{1,\cdots, n\}$, we have
 \begin{equation}
E(A_\alpha(G))^2=\left(\sum_{i=1}^{n} s_i \right)^2=\sum_{i=1}^{n} s_i^2+2\sum_{1\leq i\leq j\leq n}s_i s_j \label{eqn3}
\end{equation}
Also,
 \begin{align}
2 \sum_{1\leq i\leq j\leq n} s_i s_j =2\sum_{1\leq i\leq j\leq n} \bigg|p_i-\frac{2\alpha m}{n}\bigg| & \bigg| p_j-\frac{2\alpha m}{n} \bigg|=2\sum_{1\leq i\leq j\leq n} \bigg|\left(p_i-\frac{2\alpha m}{n}\right)\left( p_j-\frac{2\alpha m}{n}\right) \bigg| \nonumber  \\
& \geq  2 \bigg|\sum_{1\leq i\leq j\leq n} \left(p_i-\frac{2\alpha m}{n}\right)\left( p_j-\frac{2\alpha m}{n}\right) \bigg| \label{eqn4}\\
&= 2 \bigg|\sum_{1\leq i\leq j\leq n} p_ip_j -\frac{4\alpha^2 m^2(n-1)}{2n}\bigg| \nonumber\\
&= \bigg| \left( 2\sum_{1\leq i\leq j\leq n} p_ip_j-4\alpha^2 m^2\right) +\frac{4\alpha^2 m^2}{n}\bigg| \nonumber
\end{align}
Since 
\begin{equation}
\sum_{i=1}^n \sum_{j=1}^n p_i p_j=\left( \sum_{i=1}^n p_i\right)^2 \label{eqn1}
 \end{equation}
 and 
 \begin{equation}
 \sum_{i=1}^n \sum_{j=1}^n p_i p_j=\sum_{i=1}^n p_i^2+2 \sum_{1\leq i\leq j\leq n}p_ip_j \label{eqn2}
 \end{equation}
 Therefore, using Equation \ref{eqn1} and \ref{eqn2}, we get 
 \begin{align*}
 2 \sum_{1\leq i\leq j\leq n} s_i s_j \geq \bigg| \sum_{i=1}^n p_i^2-\frac{4\alpha^2 m^2}{n} \bigg|=s_i^2\,\, (\text{using Lemma } \ref{lemma 2}(iii))
 \end{align*}
 and hence from Equation \ref{eqn3}, we have
 \begin{align*}
& E(A_\alpha(G))^2\geq 2 \sum_{i=1}^n s_i^2\\
& \text{i.e. }\, E(A_\alpha(G))\geq \sqrt{2\left(\alpha^2 M_1+(1-\alpha)^2 2m-\frac{4\alpha^2 m^2}{n} \right)}
 \end{align*}
 Now, equality in Equation \ref{eqn4} holds if and only if $\left(p_i -\frac{2\alpha m}{n}\right)\geq 0$ for $i\in \{1,\cdots, n\}$ i.e. $p_i \geq \frac{2\alpha m}{n}$ for all $i$. But in such a case Lemma \ref{lemma 2}(i) will not hold for $A_\alpha$-eigenvalues of $G$. Therefore equality can not be attained by connected graph and hence a strict inequality.
\end{proof}
\begin{theorem}\label{theorem 3}
For $\alpha \in [0,1)$, let $G$ be a graph with order $n \geq 2$ and size $m \geq 1$. Let there exist some $t$, $1\leq t\leq n$ such that $s_1=s_2=\dots =s_t=h \geq s_{t+1}=s_{t+2}=\dots=s_n=k$.  Then 
\begin{align*}
E(A_\alpha(G))\geq 2 \sqrt{(\alpha^2M_1+(1-\alpha)^22m-\dfrac{4\alpha^2 m^2}{n})n}\dfrac{\sqrt{s_1s_n}}{s_1+s_n},
\end{align*}
where equality holds if and only if $G\cong \dfrac{n}{2}K_2$ or $G\cong K_{\frac{2m}{n}+1}$ or any of the following holds:
\begin{enumerate}
\item[\bf(i)] $G$ has distinct eigenvalues $\frac{2m}{n}, \frac{2m}{n}(2\alpha-1), \frac{2m\alpha}{n}+(1-\alpha)$ and $\frac{2m\alpha}{n}-(1-\alpha)$.
\item[\bf(ii)] $G$ has distinct eigenvalues $\frac{2m\alpha}{n}+h, \frac{2m\alpha}{n}-h, \frac{2m\alpha}{n}+k $ and $\frac{2m\alpha}{n}-k$, where $h>\frac{2m}{n}(1-\alpha)$ and $h>k$.
\end{enumerate}
\end{theorem}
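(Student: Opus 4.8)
The strategy is to feed the non-increasing non-negative sequence $s_1\ge s_2\ge\cdots\ge s_n$ into the P\'{o}lya--Szeg\H{o} type inequality of Lemma~\ref{Lemma 3} and then apply the arithmetic--geometric mean inequality, exploiting that the block hypothesis $s_1=\cdots=s_t=h\ge s_{t+1}=\cdots=s_n=k$ is precisely the equality case of Lemma~\ref{Lemma 3}.

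Write $\mu=\frac{2\alpha m}{n}$, $\eta_i=p_i-\mu$, and set $T:=\sum_{i=1}^n s_i^2$. Since $s_i=|\eta_i|$, Lemma~\ref{lemma 2}(iii) gives $T=\alpha^2M_1+(1-\alpha)^2 2m-\frac{4\alpha^2m^2}{n}$; moreover $T>0$ because $M_1\ge\frac{4m^2}{n}$ by Cauchy--Schwarz and $(1-\alpha)^2 2m>0$, so in particular $s_1>0$ and $s_1+s_n>0$. Applying Lemma~\ref{Lemma 3} with $a_i=s_i$ and recalling $E(A_\alpha(G))=\sum_i s_i$, we obtain
\[
(s_1+s_n)\,E(A_\alpha(G))\ \ge\ \sum_{i=1}^n s_i^2+n\,s_1 s_n\ =\ T+n\,s_1 s_n ,
\]
and, by the stated hypothesis, this is an equality. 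Next, AM--GM gives $T+n\,s_1 s_n\ge 2\sqrt{nT\,s_1 s_n}$, with equality if and only if $T=n\,s_1 s_n$; dividing by $s_1+s_n$ then yields
\[
E(A_\alpha(G))\ \ge\ \frac{2\sqrt{nT\,s_1 s_n}}{s_1+s_n}\ =\ 2\sqrt{\Bigl(\alpha^2M_1+(1-\alpha)^2 2m-\tfrac{4\alpha^2m^2}{n}\Bigr)n}\;\frac{\sqrt{s_1 s_n}}{s_1+s_n},
\]
which is the claimed bound. Combining the two displays, equality in the theorem holds if and only if $T=n\,s_1 s_n=nhk$.

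The remaining task, and the main obstacle, is to identify the graphs with $T=nhk$. The data to work with are: $|\eta_i|\in\{h,k\}$ for every $i$, so $Spec(A_\alpha(G))\subseteq\{\mu+h,\mu-h,\mu+k,\mu-k\}$; $\sum_i\eta_i=0$ (Lemma~\ref{lemma 2}(i)); $p_1\ge\frac{2m}{n}$ with equality iff $G$ is regular (Lemma~\ref{Lemma 4}); and $p_1$ simple when $G$ is connected (Proposition~\ref{proposition1}). I would first handle $h=k$: then $G$ has exactly two distinct $A_\alpha$-eigenvalues $\mu\pm h$, and the classification of such graphs (the complete graph in the connected case, together with the disconnected possibility) forces $G\cong\frac{n}{2}K_2$ or $G\cong K_{\frac{2m}{n}+1}$. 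If $h>k$, I would split on whether $p_1=\mu+h$ equals $\frac{2m}{n}$: in the boundary case $G$ is regular, and $\sum_i\eta_i=0$ together with the four admissible eigenvalues pins the spectrum down to $\bigl\{\frac{2m}{n},\ \frac{2m}{n}(2\alpha-1),\ \frac{2m\alpha}{n}+(1-\alpha),\ \frac{2m\alpha}{n}-(1-\alpha)\bigr\}$, which is item (i); otherwise $h>\frac{2m}{n}(1-\alpha)$, leaving the generic symmetric four-value spectrum $\bigl\{\frac{2m\alpha}{n}\pm h,\ \frac{2m\alpha}{n}\pm k\bigr\}$ of item (ii). Conversely, each listed family satisfies $T=nhk$ by a direct computation (for a regular graph $T=(1-\alpha)^2 nr$, and in each case $nhk$ reduces to this). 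The delicate point is to verify that $T=nhk$ together with the two-value block structure and the trace identities leaves exactly these possibilities; carrying this bookkeeping through cleanly is where the real effort lies.
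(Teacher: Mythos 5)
Your derivation of the inequality itself is exactly the paper's argument: Lemma~\ref{Lemma 3} applied to $a_i=s_i$ gives $(s_1+s_n)\sum_i s_i\ge \sum_i s_i^2+ns_1s_n$, Lemma~\ref{lemma 2}(iii) identifies $\sum_i s_i^2$ with $T=\alpha^2M_1+(1-\alpha)^2 2m-\frac{4\alpha^2m^2}{n}$, and AM--GM finishes. That part is complete and correct, and your observation that the block hypothesis makes the Lemma~\ref{Lemma 3} step an equality, so that equality in the theorem reduces to the single condition $T=ns_1s_n=nhk$, is also how the paper proceeds.

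The gap is the equality classification, which is the bulk of both the theorem statement and the paper's proof, and which you explicitly leave as ``bookkeeping.'' Concretely, the missing step is this: writing $T=\sum_i s_i^2=th^2+(n-t)k^2$ and setting it equal to $nhk$ gives $t(h+k)=nk$ (for $h\neq k$), and it is this multiplicity constraint, combined with the trace identity $\sum_i\eta_i=0$ from Lemma~\ref{lemma 2}(i), Lemma~\ref{Lemma 4}, and the simplicity of $p_1$ for connected graphs (Proposition~\ref{proposition1}), that actually pins down the admissible spectra; the paper runs this through a case analysis (connected vs.\ disconnected, $h=\frac{2m}{n}(1-\alpha)$ vs.\ $h>\frac{2m}{n}(1-\alpha)$, and the multiplicity $l$ of $\mu-h$), invoking Theorem~\ref{theorem 1} to recognize $K_{\frac{2m}{n}+1}$. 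Your sketch names the right tools but does not carry out this derivation, so the ``if and only if'' is not established. One further small inaccuracy: in the $h=k$ case you say the connected possibility is ``the complete graph,'' but for $K_n$ with $n>2$ one has $s_1=(n-1)(1-\alpha)\neq 1-\alpha=s_n$, so $h=k$ together with connectedness in fact forces $n=2$; the complete graph enters the equality characterization only through the $h>k$, $l=1$, regular subcase, not through $h=k$.
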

\begin{proof}
We have \begin{align}
E(A_\alpha(G))&=\sum_{i=1}^n s_i  \geq \dfrac{\sum_{i=1}^{n}s_i^2+n s_1 s_n}{s_1+s_n} \label{equation1.1}\\
&=\dfrac{\alpha^2M_1+(1-\alpha)^22m-\dfrac{4\alpha^2 m^2}{n}+n s_1s_n} {s_1+s_n} \label{eq1} \\
&\geq \dfrac{2\sqrt{(\alpha^2M_1+(1-\alpha)^22m-\dfrac{4\alpha^2 m^2}{n})n s_1s_n}}{s_1+s_n}\nonumber \\
&= 2\sqrt{(\alpha^2M_1+(1-\alpha)^22m-\dfrac{4\alpha^2 m^2}{n})n} \dfrac{\sqrt{s_1s_n}}{s_1+s_n} \label{equation 1.1.1}
\end{align}
Now, equality in Equation \ref{equation1.1} holds if and only if $s_1=s_2=\dots =s_t=h \geq s_{t+1}=s_{t+2}=\cdots=s_n=k$ for some $t \in \{1,2,\dots,n\}$ by Lemma \ref{Lemma 3}. Again, equality in Equation \ref{equation 1.1.1} holds if and only if
\begin{align}
\alpha^2M_1+(1-\alpha)^22m-\dfrac{4\alpha^2 m^2}{n}=n s_1 s_n. \label{equation1}
\end{align} Therefore, we get $h^2 t+k^2 (n-t)=nhk$, which gives
\begin{align}
t(h+k)=nk \label{equation2}.
\end{align}
Now, there are three possibilities for $h$ and $k$.\\
\textbf{Case 1.} $h=k$. Then $s_i=|p_i-\dfrac{2 \alpha m}{n}|=h$ for all $i$, that is, $p_i=h+\dfrac{2\alpha m}{n}$ or $p_i=-h+\dfrac{2\alpha m}{n}$ for $i\in \{1,\dots,n\}$. Therefore, $Spec~(A_\alpha(G))$=$\{ [h+\dfrac{2\alpha m}{n}]^a, [-h+\dfrac{2\alpha m}{n}]^b\}$, where $a+b=n$. Now, from Lemma \ref{lemma 2}(i), we get $a (h+\frac{2 \alpha m}{n})+b(-h+\frac{2\alpha m}{n})=2 \alpha m$, which implies $h(a-b)=0$ and thus $h=0$ or $h\neq 0$ but $a=b$.
\par If $h=0$, then $Spec~(A_\alpha(G))$=$\{ [\frac{2\alpha m}{n}]^n\}$, which is not possible according to Lemma \ref{Lemma 4}.
\par If $h\neq 0$ and $a=b$, then we get $A_\alpha-$ spectral radius of $G$ to be $p_1=h+\frac{2\alpha m}{n}$.
 \par Therefore for $n=2$, $Spec~(A_\alpha(G))$=$\{ h+\frac{2\alpha m}{n}, -h+\frac{2\alpha m}{n}\}$ and thus $G\cong K_2$ (as $G$ has two distinct eigenvalues).
 \par Again, let $n> 2$. Then Proposition \ref{proposition1} suggests that $G$ is disconnected and let $G'$ be a connected component of $G$.  The $A_\alpha-$ spectral radius of $G'$ is $h+\frac{2\alpha m}{n}$. Therefore, $G$ possesses exactly $\frac{n}{2}$ components, say, $G_1, G_2, \dots, G_{\frac{n}{2}}$ such that spectrum of $G_i$ is $\{ h+\frac{2\alpha m}{n}, -h+\frac{2\alpha m}{n}\}$ for $i\in \{1,\dots,\frac{n}{2}\}$. Thus we conclude that each $G_i$ has exactly two vertices and hence $G_i\cong K_2$. Therefore $G\cong \frac{n}{2}K_2$.\\
 \textbf{Case 2.} $h\neq k$ and without loss of generality, let $h > k$. \\
 Let multiplicity of $p_1$ be $l'$ and there exist some $l\geq l'\geq 1$ such that $A_\alpha$-spectrum of $G$ is $Spec~(A_\alpha(G))$= $\{[ \frac{2m\alpha}{n}+h]^{l'},[ \frac{2m\alpha}{n}-h]^l, [ \frac{2m\alpha}{n}+k]^a, [ \frac{2m\alpha}{n}-k]^b \}$, where $a+b=n-l$. By Lemma \ref{Lemma 4}, we have $p_1\geq \frac{2m}{n}$, which implies that $h\geq \frac{2m}{n}(1-\alpha)$.\\
\textbf{Subcase 2.1} $G$ is a connected graph. Then, $A_\alpha$-spectral radius $p_1$ is simple, that is, $l'=1$. Now, for $l\geq 2$, $p_i=\dfrac{2m\alpha}{n}-h$, $i\in \{1,\dots,n\}$.
  If $h=\frac{2m}{n}(1-\alpha)$, then $p_1=\frac{2m}{n}$ and so by Lemma \ref{Lemma 4}, $G$ is a $\frac{2m}{n}$- regular graph. Using Equation \ref{equation1}, we get $k=1-\alpha.$ Therefore, for $l+a+b=n$, we have
\begin{align*}
 Spec~(A_\alpha(G))=\left\{ \frac{2m}{n}, \left[\frac{2m}{n}(2\alpha-1)\right]^{l-1}, \left[\frac{2m\alpha}{n}+(1-\alpha)\right]^a, \left[\frac{2m\alpha}{n}-(1-\alpha)\right]^b\right\}.
\end{align*}
  Again, for connected $G$, if $l\geq 2$ and $h > \frac{2m}{n}(1-\alpha)$, then $G$ is not regular and $A_\alpha$- spectrum is $Spec~(A_\alpha(G))$=$\{\frac{2m\alpha}{n}+h, [\frac{2m\alpha}{n}-h]^{l-1}, [\frac{2m\alpha}{n}+k]^a, [\frac{2m\alpha}{n}-k]^b  \}$, where $a+b+l=n$. Now, from Equation \ref{equation2},  we get $l(h+k)=nk$ which gives $h=\frac{k(n-l)}{l}$. Therefore, from Lemma \ref{lemma 2} (i), we get
  \begin{align*}
  \left(\frac{2m\alpha}{n}+h\right)+(l-1)\left(\frac{2m\alpha}{n}-h\right)+a\left(\frac{2m\alpha}{n}+k)
  +(n-a-l\right)\left(\frac{2m\alpha}{n}-k\right)=2m\alpha.
\end{align*}
This gives $2h-lh+2ak-nk+kl = 0$, so that $(2-l)(n-l)+2al-nl+l^2=0$, as $k\neq 0$.
This implies that $a=b(l-1).$ Therefore, $Spec~(A_\alpha(G))$=$\{ \frac{2m\alpha}{n}+h, [\frac{2m\alpha}{n}-h]^{l-1}, [\frac{2m\alpha}{n}+k]^{b(l-1)}, [\frac{2m\alpha}{n}-k]^b \}$, where $l(b+1)=n$. \\
 Again, if $l=1$, then $Spec~(A_\alpha(G))$=$\{\frac{2m\alpha}{n}+h, [\frac{2m\alpha}{n}+k]^a, [\frac{2m\alpha}{n}-k]^b  \}$, where $a+b=n-1$. Now, if $h=\frac{2m}{n}(1-\alpha)$, then $G$ is a $\frac{2m}{n}-$ regular graph and thus the $Spec~(A_\alpha(G))$=$\{\frac{2m}{n}, [\frac{2m\alpha}{n}+(1-\alpha)]^a, [\frac{2m\alpha}{n}-(1-\alpha)]^b\}$, where $a+b=n-1$. Therefore, by Theorem \ref{theorem 1}, $G\cong K_{\frac{2m}{n}+1}$.
 \par Also, if $h > \frac{2m}{n}(1-\alpha)$, then Equation \ref{equation2} suggests that $h=(n-1)k$  where $k\neq 0$. Therefore, $Spec~(A_\alpha(G))$=$\{ \frac{2m\alpha}{n}+(n-1)k, [\frac{2m\alpha}{n}+k]^a, [\frac{2m\alpha}{n}-k]^{n-a-1} \}$. With this $A_{\alpha}$-spectrum, using Lemma \ref{lemma 2}(i), we get $a=0$ (as $k\neq 0$) and so the spectrum becomes $Spec~(A_\alpha(G))$=$\{\frac{2m\alpha}{n}+(n-1)k, [\frac{2m\alpha}{n}-k]^{n-1} \}$ which is possible only when $G\cong K_n$. This is a contradiction as $G$ is not regular. Therefore, $h\ngtr \frac{2m}{n}(1-\alpha)$ for $l=1, l'=1$.\\
\textbf{Subcase 2.2.} Let $G$ be disconnected.\\
 If $G$ is a regular graph, then $q_1=\frac{2m}{n}$ which is possible only when $h=\frac{2m}{n}(1-\alpha)$ and so as obtained above, we get $k=1-\alpha$. Therefore, for $l+l'+a+b=n$, the $A_\alpha-$ spectrum is
$Spec~(A_\alpha(G))$=$\{ [\frac{2m}{n}]^{l'}, [\frac{2m}{n}(2\alpha-1)]^{l-l'}, [\frac{2m\alpha}{n}+(1-\alpha)]^{a}, [\frac{2m\alpha}{n}-(1-\alpha)]^{b}\}$.
\par If $G$ is not a regular graph, then it implies that $h> \frac{2m}{n}(1-\alpha)$ and therefore proceeding as in Case 1, we get $\frac{b}{a}=\frac{l'}{l}=r$ (say),. Therefore, the resulting $A_\alpha$-spectrum becomes
\begin{align*}
Spec~(A_\alpha(G))=\left\{\left[\frac{2m\alpha}{n}+h\right]^{lr}, \left[\frac{2m\alpha}{n}-h\right]^{l}, \left[\frac{2m\alpha}{n}+k\right]^{a},  \left[\frac{2m\alpha}{n}-k\right]^{ar} \right\},
\end{align*}
where $(1+r)(l+a)=n$.
\end{proof}
Now, we further simplify the lower bound of $E(A_\alpha(G))$ with some estimation on the parameter $\dfrac{\sqrt{s_1s_n}}{s_1+s_n}$
\begin{corollary}\label{corollary 1}
Let $G$ be a connected graph with order $n \geq 3$ and size $m \geq 2$. If $s_n \geq \dfrac{\sqrt{c}}{2n}$, where $c=m\{\alpha^2n^3+n^2(2-4\alpha-\alpha^2)+n(4\alpha-2-2\alpha^2m)+4\alpha^2 m\},$ then for $\alpha \in [0,1)$,
\begin{align}
E(A_\alpha(G))\geq \dfrac{2\sqrt{2}}{3}\sqrt{\left((1-\alpha)^22m+\dfrac{\alpha^2}{2}(\Delta-\delta)^2\right)n}, \label{equation3}
\end{align}
where equality holds if and only if $G \cong K_3$.
\end{corollary}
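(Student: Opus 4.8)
The plan is to feed two independent estimates into the inequality of Theorem \ref{theorem 3}, which holds for every connected $G$ with $n\geq2$ and $m\geq1$: the chain producing it -- apply Lemma \ref{Lemma 3} to $E(A_\alpha(G))=\frac{1}{s_1+s_n}\sum_{i=1}^n s_i(s_1+s_n)$, then Lemma \ref{lemma 2}(iii), then the AM--GM inequality -- uses no hypothesis on the multiplicities, so
\begin{align*}
E(A_\alpha(G))\geq 2\sqrt{\Big(\alpha^2M_1+(1-\alpha)^22m-\tfrac{4\alpha^2 m^2}{n}\Big)n}\;\frac{\sqrt{s_1s_n}}{s_1+s_n}.
\end{align*}
For the term under the radical I would apply the bound of \cite{3}, $M_1\geq\frac{4m^2}{n}+\frac{(\Delta-\delta)^2}{2}$, obtaining $\alpha^2M_1+(1-\alpha)^22m-\frac{4\alpha^2 m^2}{n}\geq(1-\alpha)^22m+\frac{\alpha^2}{2}(\Delta-\delta)^2$, with equality iff $G$ is regular.

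The core is the estimate $\frac{\sqrt{s_1s_n}}{s_1+s_n}\geq\frac{\sqrt2}{3}$. Put $x=s_1/s_n\geq1$, so that $\frac{\sqrt{s_1s_n}}{s_1+s_n}=\frac{\sqrt x}{x+1}$; since $9x-2(x+1)^2=-(2x-1)(x-2)\geq0$ on $[1,2]$, we have $\frac{\sqrt x}{x+1}\geq\frac{\sqrt2}{3}$ as soon as $x\leq2$, i.e. as soon as $s_1\leq2s_n$. Given the hypothesis $s_n\geq\frac{\sqrt c}{2n}$, it is enough to prove the unconditional bound $s_1\leq\frac{\sqrt c}{n}$. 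For this I would begin with $s_1^2\leq\frac{n-1}{n}\sum_{i=1}^n s_i^2$ (which follows from $\sum_{i=1}^n\big(p_i-\frac{2\alpha m}{n}\big)=0$ and Cauchy--Schwarz; it is the variance-type bound underlying Lemma \ref{lemma 5}), substitute $\sum_i s_i^2=\alpha^2M_1+(1-\alpha)^22m-\frac{4\alpha^2 m^2}{n}$ from Lemma \ref{lemma 2}(iii), and then apply $M_1\leq m\big(\frac{2m}{n-1}+n-2\big)$ from Lemma \ref{lemma 6}. A routine expansion should confirm
\begin{align*}
\frac{n-1}{n}\Big(\alpha^2 m\big(\tfrac{2m}{n-1}+n-2\big)+(1-\alpha)^22m-\tfrac{4\alpha^2 m^2}{n}\Big)=\frac{c}{n^2},
\end{align*}
giving $s_1\leq\frac{\sqrt c}{n}=2\cdot\frac{\sqrt c}{2n}\leq2s_n$, hence $\frac{\sqrt{s_1s_n}}{s_1+s_n}\geq\frac{\sqrt2}{3}$. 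Substituting both estimates into the displayed inequality yields $E(A_\alpha(G))\geq\frac{2\sqrt2}{3}\sqrt{\big((1-\alpha)^22m+\frac{\alpha^2}{2}(\Delta-\delta)^2\big)n}$.

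For the equality statement I would follow the equality cases of the inequalities used. Equality in the conclusion forces: $G$ regular (from the $M_1$-bound of \cite{3}); equality in the Cauchy--Schwarz step and in Lemma \ref{lemma 6}, which together with $s_n\geq\frac{\sqrt c}{2n}$ force $s_1=\frac{\sqrt c}{n}$ and $s_n=\frac{\sqrt c}{2n}$; and $s_1=2s_n$. Specialising equality in Lemma \ref{lemma 6} to an $r$-regular graph gives $r(n-2)=(n-1)(n-2)$, so $r=n-1$ and $G\cong K_n$. For $K_n$, Lemma \ref{lemma 1} gives $\frac{2\alpha m}{n}=\alpha(n-1)$, whence $s_1=(n-1)(1-\alpha)$ and $s_i=1-\alpha$ for $i\geq2$, so $s_1/s_n=n-1$, which equals $2$ only for $n=3$. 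Conversely, $K_3$ meets all the conditions: $c=36(1-\alpha)^2$, so $s_n=1-\alpha=\frac{\sqrt c}{2n}$ and the hypothesis holds, while $E(A_\alpha(K_3))=4(1-\alpha)=\frac{2\sqrt2}{3}\sqrt{6(1-\alpha)^2\cdot3}$. Thus equality holds if and only if $G\cong K_3$.

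I expect the only genuine obstacle to be the bookkeeping: verifying the polynomial identity above so that the bound on $s_1$ is exactly $\frac{\sqrt c}{n}$ -- matching, up to the factor $2$ demanded by $\frac{\sqrt x}{x+1}\geq\frac{\sqrt2}{3}$, the threshold $\frac{\sqrt c}{2n}$ in the hypothesis -- and confirming that the various equality conditions can hold simultaneously only for $K_3$.
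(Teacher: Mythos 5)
Your proposal is correct and follows essentially the same route as the paper: the lower bound from Theorem \ref{theorem 3} combined with the Milovanovi\'c bound $M_1\geq \frac{4m^2}{n}+\frac{(\Delta-\delta)^2}{2}$, the upper bound $s_1\leq \frac{\sqrt{c}}{n}$ obtained from the variance/trace bound (Lemma \ref{lemma 5}) together with de Caen's inequality (Lemma \ref{lemma 6}), and the ratio estimate $\frac{\sqrt{s_1s_n}}{s_1+s_n}\geq\frac{\sqrt{2}}{3}$ when $s_1\leq 2s_n$; your polynomial identity for $c$ does check out, and your equality analysis (regular $\Rightarrow K_n$ via Lemma \ref{lemma 6}, then $s_1/s_n=n-1=2$ forcing $n=3$) matches the paper's.
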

 \begin{proof}
 By using Lemmas \ref{lemma 5} and \ref{lemma 6}, we have
 \begin{align*}
 s_1&=|p_1-\dfrac{2\alpha m}{n}|=  p_1-\dfrac{2\alpha m}{n} \leq  \dfrac{tr(A_\alpha)}{n}+\sqrt{\dfrac{n-1}{n}\left\lbrace tr A_\alpha^2-\dfrac{(tr A_\alpha)^2}{n}\right\rbrace} -\dfrac{2\alpha m}{n}\\
 & \leq \sqrt{\dfrac{n-1}{n}\left\lbrace \alpha^2 m \left( \frac{2m}{n-1}+n-2\right)+(1-\alpha)^22m -\dfrac{4 \alpha^2 m^2}{n}\right\rbrace}\\
 &= \dfrac{\sqrt{2 \alpha^2 m^2 n+(n^2-n)\{ mn \alpha^2-2 \alpha^2 m+2m(1-\alpha)^2\}-4 \alpha^2 m^2 (n-1)}}{n}\\
 &=\dfrac{\sqrt{c}}{n},
\end{align*}
where $c=m\{\alpha^2n^3+n^2(2-4\alpha-\alpha^2)+n(4\alpha-2-2\alpha^2m)+4\alpha^2 m\}$. Therefore,
$\frac{\sqrt{c}}{2n}\leq s_n \leq s_1 \leq \frac{\sqrt{c}}{n}$ and thus $\frac{\sqrt{s_1 s_n}}{s_1+s_n}\geq \frac{\sqrt{\frac{\sqrt{c}}{2n} \frac{\sqrt{c}}{n}}}{\frac{\sqrt{c}}{2n}+\frac{\sqrt{c}}{n}}=\frac{\sqrt{2}}{3}$. Now, from Theorem \ref{theorem 3}, we get
\begin{align}
E(A_\alpha(G))& \geq 2 \sqrt{\left(\alpha^2M_1+(1-\alpha)^22m-\dfrac{4\alpha^2 m^2}{n}\right)n}~\dfrac{\sqrt{s_1s_n}}{s_1+s_n} \label{equation4} \\
&\geq \dfrac{2\sqrt{2}}{3} \sqrt{\left(\alpha^2M_1+(1-\alpha)^22m-\dfrac{4\alpha^2 m^2}{n}\right)n} \label{equation5}\\
&\geq \dfrac{2\sqrt{2}}{3} \sqrt{\left(\dfrac{\alpha^2}{2}(\Delta-\delta)^2+(1-\alpha)^2 2m \right)n}, \label{equation6}
\end{align}
where equality in Equation \ref{equation4} holds as shown in the cases of Theorem \ref{theorem 3}, equality in Equation \ref{equation5} holds if and only if $s_1=\frac{\sqrt{c}}{n}$ and $\frac{\sqrt{c}}{2n}$ and equality in Equation \ref{equation6} holds for connected regular graph which leads that equality in Equation \ref{equation3} happens only if $G\cong K_{\frac{2m}{n}+1}=K_n$. Also, for $K_n$, we have $s_1=\frac{\sqrt{c}}{n}=(n-1)(1-\alpha)$ and $s_n=\frac{\sqrt{c}}{2n}=1-\alpha$, which gives $n=3$. Therefore equality in Equation \ref{equation3} is attained only if $G\cong K_3$. Conversely, if $G\cong K_3$, then $Spec~(A_\alpha(G))$=$\{ 2, [3\alpha-1]^2\}$ such that $E(A_\alpha(K_3))=4(1-\alpha)=\frac{2\sqrt{2}}{3} \sqrt{\left(\frac{\alpha^2}{2}(\Delta-\delta)^2+(1-\alpha)^2 2m \right)n}.$
 \end{proof}
 \begin{corollary}\label{corollary2}
 For a connected graph $G$ with order $n \geq 3$ and size $m \geq 2$, if $s_n \geq \dfrac{\sqrt{c}}{n^3}$ for the same $c$ as mentioned in Corollary \ref{corollary 1}, then for $\alpha \in [0,1)$
\begin{align*}
E(A_\alpha(G))> \dfrac{2n}{1+n^2}\sqrt{\left((1-\alpha)^22m+\dfrac{\alpha^2}{2}(\Delta-\delta)^2\right)n}. \label{equation3.1}
\end{align*}
 \end{corollary}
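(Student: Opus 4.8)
The plan is to re-run the proof of Corollary~\ref{corollary 1}, changing only the lower estimate of the factor $\frac{\sqrt{s_1 s_n}}{s_1+s_n}$ to account for the weaker hypothesis $s_n\ge\frac{\sqrt c}{n^3}$ in place of $s_n\ge\frac{\sqrt c}{2n}$ (note $\frac{\sqrt c}{n^3}\le\frac{\sqrt c}{2n}$ for $n\ge2$, so the present statement covers more graphs at the price of a weaker bound). First I would retain verbatim from Corollary~\ref{corollary 1} the chain that, by applying Lemma~\ref{lemma 5} to the non-negative matrix $A_\alpha(G)$ (with $tr(A_\alpha)=2\alpha m$ and $tr(A_\alpha^2)=\alpha^2 M_1+(1-\alpha)^2 2m$ from Lemma~\ref{lemma 2}) and then bounding $M_1$ via Lemma~\ref{lemma 6}, gives $s_1=p_1-\frac{2\alpha m}{n}\le\frac{\sqrt c}{n}$ with the same constant $c$; here $c>0$, since by Lemmas~\ref{lemma 6} and~\ref{lemma 2}(iii) one has $c/n^2\ge\frac{n-1}{n}\sum_i\eta_i^2>0$ for a connected graph, so $0<s_n\le s_1$ and $t:=\frac{s_n}{s_1}$ is well defined. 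The hypothesis $s_n\ge\frac{\sqrt c}{n^3}$ together with $s_1\le\frac{\sqrt c}{n}$ forces $t\ge\frac1{n^2}$, hence $t\in\big[\frac1{n^2},1\big]$; since $\phi(u)=\frac{\sqrt u}{1+u}$ is strictly increasing on $(0,1]$ its minimum over that interval is $\phi\big(\frac1{n^2}\big)=\frac{n}{n^2+1}$, so
$$\frac{\sqrt{s_1 s_n}}{s_1+s_n}=\phi(t)\ \ge\ \frac{n}{n^2+1}.$$

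Plugging this into Theorem~\ref{theorem 3} and finishing exactly as in Corollary~\ref{corollary 1} yields
$$E(A_\alpha(G))\ \ge\ 2\sqrt{\left(\alpha^2 M_1+(1-\alpha)^2 2m-\frac{4\alpha^2 m^2}{n}\right)n}\cdot\frac{n}{n^2+1}\ \ge\ \frac{2n}{n^2+1}\sqrt{\left((1-\alpha)^2 2m+\frac{\alpha^2}{2}(\Delta-\delta)^2\right)n},$$
the last step being the substitution of $M_1\ge\frac{4m^2}{n}+\frac{(\Delta-\delta)^2}{2}$ (the inequality of \cite{3}) to replace $\alpha^2 M_1-\frac{4\alpha^2 m^2}{n}$ by $\frac{\alpha^2}{2}(\Delta-\delta)^2$. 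This already gives the asserted estimate with $\ge$.

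The remaining---and the only genuinely new---point is to show that equality never occurs, so the inequality is strict; this is the step I expect to require the most care. If equality held throughout, then in particular $\phi(t)=\phi\big(\frac1{n^2}\big)$, so strict monotonicity of $\phi$ forces $t=\frac1{n^2}$, that is $s_1=n^2 s_n$; combined with $s_n\ge\frac{\sqrt c}{n^3}$ and $s_1\le\frac{\sqrt c}{n}$ this forces $s_1=\frac{\sqrt c}{n}$. But $s_1=\frac{\sqrt c}{n}$ in particular forces equality in the application of Lemma~\ref{lemma 5} to $A_\alpha(G)$, which holds only when the $n-1$ eigenvalues of $A_\alpha(G)$ other than $p_1$ are all equal, i.e.\ $A_\alpha(G)$ has exactly two distinct eigenvalues; by Theorem~\ref{theorem 2} (the only connected graph with exactly two distinct $A_\alpha$-eigenvalues being $K_n$) this forces $G\cong K_n$. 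For $K_n$, Lemma~\ref{lemma 1} gives $s_1=(n-1)(1-\alpha)$ and $s_n=1-\alpha$, so $\frac{s_1}{s_n}=n-1$, which can never equal $n^2$ (as $n^2-n+1>0$), contradicting $s_1=n^2 s_n$. Hence equality is impossible and the inequality is strict. The subtle point is precisely that the strictness must be extracted from the equality condition of Lemma~\ref{lemma 5} through Theorem~\ref{theorem 2}; the inequality $M_1\ge\frac{4m^2}{n}+\frac{(\Delta-\delta)^2}{2}$ is used only with $\ge$ (and is an identity when $\alpha=0$), so it cannot on its own deliver the strict inequality.
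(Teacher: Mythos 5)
Your proposal is correct and follows essentially the same route as the paper: rerun Corollary~\ref{corollary 1} with $s_n\ge\frac{\sqrt c}{n^3}$ and $s_1\le\frac{\sqrt c}{n}$ to get the factor $\frac{n}{n^2+1}$, then rule out equality by showing it would force $G\cong K_n$ with $s_1/s_n=n^2$, which contradicts $s_1/s_n=n-1$ (the paper writes this as $n^2=n-1$). Your treatment is in fact slightly more careful than the paper's at two points---you justify the substitution into $\frac{\sqrt{s_1s_n}}{s_1+s_n}$ via monotonicity of $\phi(u)=\frac{\sqrt u}{1+u}$ rather than plugging in endpoints, and you reach $G\cong K_n$ through the equality condition of Lemma~\ref{lemma 5} combined with Theorem~\ref{theorem 2} rather than by citing the equality analysis of Corollary~\ref{corollary 1}---but these are refinements of the same argument, not a different one.
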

\begin{proof}
This can be attained in a similar way as in Corollary \ref{corollary 1} and equality will be reached if and only if $G\cong K_n$ and $s_n=\frac{\sqrt{c}}{n^3}$ and $\frac{\sqrt{c}}{n}$, that is, $s_n=\frac{\sqrt{c}}{n^3}=1-\alpha$ and $\frac{\sqrt{c}}{n}=(n-1)(1-\alpha)$, which gives $n^2=n-1$, a contradiction. Hence equality can not be obtained in this case.
\end{proof}
According to Proposition \ref{proposition2}, for $\alpha\geq \frac{1}{2},$ the smallest $A_\alpha$-eigenvalue i.e. $p_n\geq 0$. In the above corollary, we have assumed that $s_n\geq \frac{\sqrt{c}}{n^3}$ which gives $s_n \rightarrow 0$ as $n\rightarrow \infty$, that is, $p_n\rightarrow 0$ as $n \rightarrow \infty.$ Therefore, for $\alpha \geq \frac{1}{2},$ and for a given small positive number $\epsilon$ to characterize graphs with $s_n\geq \epsilon$ for all $n \in \mathbb{N}$ will be an interesting problem to investigate. Taking $s_n=0$, we have the following improved bounds.
\begin{theorem} \label{theorem4}
Let $G$ be a connected graph with order $n$ and size $m$. If $s_n=0$, then for $\alpha \in [0,1)$, then
\begin{equation*}
E(A_\alpha(G))\geq \dfrac{\alpha^2 M_1+(1-\alpha)^2 2m-\frac{4\alpha^2 m^2}{n}}{s_1}, \label{equation6.1}
\end{equation*}
with equality if and only if any of the following hold.
\begin{enumerate}
\item[\bf(i)] $G$ has distinct $A_\alpha$-eigenvalues $\frac{n}{2}$, $\frac{n(2\alpha-1)}{2}$ and $\frac{n\alpha}{2}$ i.e., $G\cong K_{\frac{n}{2}, \frac{n}{2}}$.
\item[\bf(ii)] $G$ has distinct $A_\alpha$-eigenvalues $h+\frac{2m\alpha}{n}, -h+\frac{2m\alpha}{n}$ and $\frac{2m\alpha}{n}$ for $h>\frac{2m}{n}(1-\alpha)$.
\end{enumerate}
\end{theorem}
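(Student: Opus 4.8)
The starting point is the familiar Cauchy--Schwarz / power-mean manipulation, now specialized to the case $s_n=0$. Recall that $E(A_\alpha(G))=\sum_{i=1}^n s_i$ with $s_1\geq s_2\geq\cdots\geq s_n=0$ and, by Lemma \ref{lemma 2}(iii), $\sum_{i=1}^n s_i^2=\alpha^2 M_1+(1-\alpha)^2 2m-\frac{4\alpha^2 m^2}{n}$. The plan is to apply Lemma \ref{Lemma 3} to the non-negative reals $a_i=s_i$: this gives $\sum_{i=1}^n s_i(s_1+s_n)\geq \sum_{i=1}^n s_i^2+n\,s_1 s_n$. Setting $s_n=0$ collapses this to $s_1\sum_{i=1}^n s_i\geq \sum_{i=1}^n s_i^2$, i.e.
\[
E(A_\alpha(G))=\sum_{i=1}^n s_i \geq \frac{\sum_{i=1}^n s_i^2}{s_1}=\frac{\alpha^2 M_1+(1-\alpha)^2 2m-\frac{4\alpha^2 m^2}{n}}{s_1},
\]
which is exactly the claimed inequality. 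This is the easy part and follows the same template used in Theorem \ref{theorem 3}; no new idea is needed beyond substituting $s_n=0$.

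The substantive part is the characterization of the equality case, and this is where I expect the real work to be. By the equality clause of Lemma \ref{Lemma 3}, equality forces $s_1=s_2=\cdots=s_t$ and $s_{t+1}=\cdots=s_n=0$ for some $t$; that is, there are essentially only two distinct values among the $s_i$, namely some $h>0$ (with multiplicity $t$) and $0$ (with multiplicity $n-t$). Translating back to $A_\alpha$-eigenvalues, each $p_i$ equals $\frac{2m\alpha}{n}+h$, or $\frac{2m\alpha}{n}-h$, or $\frac{2m\alpha}{n}$. So $G$ has at most three distinct $A_\alpha$-eigenvalues, of the stated form. I would then split into the same two regimes as in Theorem \ref{theorem 3}: whether $p_1$ is achieved at $\frac{2m\alpha}{n}+h$ with $h=\frac{2m}{n}(1-\alpha)$ (the regular case, via Lemma \ref{Lemma 4}, forcing $p_1=\frac{2m}{n}$) or with $h>\frac{2m}{n}(1-\alpha)$ (the non-regular case). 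Using $\sum p_i=2\alpha m$ (Lemma \ref{lemma 2}(i)) together with Proposition \ref{proposition1} to pin down multiplicities, the regular subcase should reduce the spectrum to $\{[\tfrac n2],[\tfrac{n(2\alpha-1)}2]^{?},[\tfrac{n\alpha}2]^{?}\}$; identifying this with $K_{n/2,n/2}$ requires recognizing that a connected regular graph with exactly these three $A_\alpha$-eigenvalues (equivalently, three adjacency eigenvalues $r,0,-r$) is the complete bipartite graph, which one can cite from the known classification of graphs with three distinct adjacency eigenvalues, or verify directly since $m=n^2/4$, $r=n/2$ here. The non-regular subcase yields possibility (ii) directly.

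The main obstacle, I expect, is the bookkeeping in the non-regular subcase: once $p_1=\frac{2m\alpha}{n}+h$ with $h>\frac{2m}{n}(1-\alpha)$ and $p_1$ simple, one must rule out that the "middle" eigenvalue $\frac{2m\alpha}{n}$ actually appears with positive multiplicity alongside $\frac{2m\alpha}{n}-h$ in a way that contradicts $\sum p_i=2\alpha m$ or $\sum p_i^2=\alpha^2 M_1+(1-\alpha)^2 2m$, and similarly to confirm that $\frac{2m\alpha}{n}-h$ occurs (so that the graph is genuinely non-trivial and not $K_n$). This is precisely the type of argument carried out in Subcase 2.1 of Theorem \ref{theorem 3}, using the two trace identities to force relations among the multiplicities, so I would reuse that machinery rather than reinvent it. I would also double-check that $s_n=0$ is consistent with $\alpha\in[0,1)$ at all — it forces some $p_i=\frac{2m\alpha}{n}$, which is fine — and note that for $\alpha\geq\frac12$ this is compatible with $A_\alpha$ being positive semidefinite (Proposition \ref{proposition2}) only when $\frac{2m\alpha}{n}$ is itself the smallest eigenvalue, a remark worth inserting for completeness.
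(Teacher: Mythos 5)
Your proposal follows essentially the same route as the paper: apply Lemma \ref{Lemma 3} to the $s_i$, set $s_n=0$ to get the bound, then use the equality clause of that lemma together with the trace identities of Lemma \ref{lemma 2} and the simplicity of $p_1$ (Proposition \ref{proposition1}) to force $t=2$ and split into the regular case (yielding $r=n/2$ and the spectrum of $K_{n/2,n/2}$) and the non-regular case (yielding (ii)). Your additional remark that identifying the regular case with $K_{n/2,n/2}$ requires invoking the classification of connected regular graphs with adjacency eigenvalues $r,0,-r$ is a fair point the paper glosses over, but it does not change the argument.
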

\begin{proof}
From Equation \ref{eq1}, we have
\begin{align}
\sum_{i=1}^n s_i\geq \frac{\alpha^2 M_1+(1-\alpha)^2 2m-\frac{4\alpha^2 m^2}{n}}{s_1}  \, \, \, \, \, (\text{as } \, s_n=0)  \label{equation7}
\end{align}
 with equality if and only if $s_1=s_2 = \dots =s_t$ and $ s_{t+1}=\dots = s_{n}=0$ for some $1 \leq t\leq n$. Suppose that Equation \ref{equation7} is an equality. Then $s_i=|p_i-\frac{2m\alpha}{n}|=h$ for $i\in \{1,\dots,t\}$ and $s_j=|p_j-\frac{2m\alpha}{n}|=0$ for $j\in \{t+1, \dots,n\}$, that is, $p_i \in \{ h+\frac{2m\alpha}{n}, -h+\frac{2m\alpha}{n}\}$ and $p_j=\frac{2m\alpha}{n}$.
 \par Since $G$ is connected, we have $p_1=h+\frac{2m\alpha}{n}$, that is, the $A_\alpha$-spectral radius of $G$ is simple and thus $Spec~(A_\alpha(G))$=$\{ h+\frac{2m\alpha}{n}, [-h+\frac{2m\alpha}{n}]^{t-1}, [\frac{2m\alpha}{n}]^{n-t}\}$. \\
 \textbf{Case 1.} If $G$ is $r=\frac{2m}{n}$ regular graph, then $h=\frac{2m}{n}(1-\alpha)$ and thus  $Spec~(A_\alpha(G))$=$\{r ,[r(2\alpha-1)]^{t-1}, [r\alpha]^{n-t}\}$. Therefore, using Lemma \ref{lemma 2}[i] and Lemma \ref{lemma 2}[ii], we have $$r+(t-1) r (2\alpha-1)+(n-t)r\alpha=2m\alpha$$  which gives $t=2$ and the equation $$r^2+r^2 (2\alpha-1)^2+(n-t)r^2\alpha^2=\alpha^2 M_1+(1-\alpha)^2 2m$$ gives $r=\frac{n}{2}$. Therefore, the $A_\alpha$-spectrum of $G$ is $\{\frac{n}{2}, \frac{n(2\alpha-1)}{2}, [\frac{n\alpha}{2}]^{n-2}\}\cong$ $Spec~(A_\alpha(K_{\frac{n}{2}, \frac{n}{2}}))$.\\
 \textbf{Case 2.} If $G$ is non-regular, then $h> \frac{2m (1-\alpha)}{n}$ and therefore $\alpha-$ eigenvalues of $G$ are $h+\frac{2m\alpha}{n}$, $-h+\frac{2m\alpha}{n},$ and  $\frac{2m\alpha}{n}$ with multiplicities $1$, $t-1$ and $n-t$ respectively. For $t=1$, $Spec~(A_\alpha(G))$=$\{h+\frac{2m\alpha}{n}, [\frac{2m\alpha}{n}]^{n-1} \}$ which implies that $G$ has two distinct eigenvalues and hence $G\cong K_n$ which is a contradiction, since $G$ is non-regular. Therefore $t\geq 2$ and so
 \begin{align*}
 \left(h+\frac{2m\alpha}{n}\right)+(t-1)\left(-h+\frac{2m\alpha}{n}\right)+(n-t)\left(\frac{2m\alpha}{n}\right)=2m\alpha,
 \end{align*}
 which gives $t=2$ (as $h\neq 0$). Thus,
 $$ Spec~(A_\alpha(G))=\left\{ h+\frac{2m\alpha}{n}, -h+\frac{2m\alpha}{n}, \left[\frac{2m\alpha}{n}\right]^{n-2}\right\}$$.
\end{proof}
\begin{corollary}
For any connected graph $G$ with order $n$ and size $m$ and $\alpha \in [0,1)$, if $s_n=0$, then  
\begin{equation*}
E(A_\alpha(G))\begin{cases}
> \frac{(1-\alpha)^22m+\frac{(\Delta-\delta)^2}{2}}{\Delta-\frac{2\alpha m}{n}} & \,\, \text{for}\,\,  \text{non regular }\,\,G \\
\geq (1-\alpha)n & \text{for regular G and equality attains iff}\,\, G\cong K_{\frac{n}{2}, \frac{n}{2}}
\end{cases}
\end{equation*}
\begin{proof}
As $s_1=|p_1-\frac{2\alpha m}{n}|=p_1-\frac{2\alpha m}{n}\leq \Delta-\frac{2\alpha m}{n}$, therefore, from Equation \ref{equation6}, we have
\begin{align*}
E(A_\alpha(G))& \geq \dfrac{\alpha^2 M_1+(1-\alpha)^2 2m-\frac{4\alpha^2 m^2}{n}}{s_1} \\
& \geq  \dfrac{(1-\alpha)^2 2m+\frac{\alpha^2 (\Delta-\delta)^2}{2}}{\Delta-\frac{2\alpha m}{n}},
\end{align*}
where first equality is attained if and only if $Spec~(A_\alpha(G))=\{\frac{n}{2}, \frac{n(2\alpha-1)}{2}, [\frac{n\alpha}{2}]^{n-2}\}$ or $Spec~(A_\alpha(G))=\{ h+\frac{2m\alpha}{n}, -h+\frac{2m\alpha}{n}, [\frac{2m\alpha}{n}]^{n-2}\}$ for $h> \frac{2m(1-\alpha)}{n}$ and second equality is attained if and only if $G$ is a regular graph. Therefore, if $G$ is not regular, then $ E(A_\alpha(G))> \frac{(1-\alpha)^22m+\frac{\alpha^2(\Delta-\delta)^2}{2}}{\Delta-\frac{2\alpha m}{n}}$ and if $G$ is regular, then $E(A_\alpha(G))=\frac{(1-\alpha)^22m+\frac{\alpha^2(\Delta-\delta)^2}{2}}{\Delta-\frac{2\alpha m}{n}}=n(1-\alpha)$.
\end{proof}
\end{corollary}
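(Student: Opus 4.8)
The plan is to post-process the inequality of Theorem~\ref{theorem4} by feeding into it the two sharp degree-based estimates for $p_1$ and $M_1$ already recorded in the paper. Under the standing hypothesis $s_n=0$, Theorem~\ref{theorem4} gives
\[
E(A_\alpha(G))\ \ge\ \frac{\alpha^2 M_1+(1-\alpha)^2 2m-\frac{4\alpha^2 m^2}{n}}{s_1},
\]
so it suffices to bound the denominator from above and the numerator from below. For the denominator, Lemma~\ref{Lemma 4} gives $p_1\ge\frac{2m}{n}>\frac{2\alpha m}{n}$ (here $\alpha<1$ is used), hence $s_1=p_1-\frac{2\alpha m}{n}$, and the bound $p_1\le\Delta$ proved in Section~3 yields $0<s_1\le\Delta-\frac{2\alpha m}{n}$. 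For the numerator, the inequality $M_1\ge\frac{4m^2}{n}+\frac{(\Delta-\delta)^2}{2}$ recorded in Section~2 gives, after the $\frac{4\alpha^2 m^2}{n}$ terms cancel,
\[
\alpha^2 M_1+(1-\alpha)^2 2m-\frac{4\alpha^2 m^2}{n}\ \ge\ (1-\alpha)^2 2m+\frac{\alpha^2(\Delta-\delta)^2}{2}.
\]
Combining the two estimates gives $E(A_\alpha(G))\ge\frac{(1-\alpha)^2 2m+\frac{\alpha^2(\Delta-\delta)^2}{2}}{\Delta-\frac{2\alpha m}{n}}$, the asserted bound (read with the coefficient $\frac{\alpha^2}{2}$ on $(\Delta-\delta)^2$, exactly as in the proof of Corollary~\ref{corollary 1}).

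Next I would split according to regularity. If $G$ is non-regular then $M_1>\frac{4m^2}{n}+\frac{(\Delta-\delta)^2}{2}$ strictly (its equality case is exactly regularity), so the chain is strict and $E(A_\alpha(G))>\frac{(1-\alpha)^2 2m+\frac{\alpha^2(\Delta-\delta)^2}{2}}{\Delta-\frac{2\alpha m}{n}}$. If $G$ is $r$-regular then $\Delta=\delta=r=\frac{2m}{n}$, so $M_1=\frac{4m^2}{n}$, $s_1=\frac{2m}{n}-\frac{2\alpha m}{n}=\frac{2m(1-\alpha)}{n}$, and the right-hand side collapses to $\frac{(1-\alpha)^2 2m}{2m(1-\alpha)/n}=n(1-\alpha)$, giving $E(A_\alpha(G))\ge n(1-\alpha)$. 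In this regular case equality is equivalent to equality in the bound of Theorem~\ref{theorem4}, which by that theorem occurs only in its two listed spectral configurations; configuration~(ii) has spectral radius $h+\frac{2m\alpha}{n}$ with $h>\frac{2m}{n}(1-\alpha)$, hence $p_1>\frac{2m}{n}$ and $G$ is not $\frac{2m}{n}$-regular, so it is excluded, leaving configuration~(i), which is precisely $K_{\frac n2,\frac n2}$. Conversely $K_{\frac n2,\frac n2}$ is $\frac n2$-regular, satisfies $s_n=0$, and has $A_\alpha$-energy $n(1-\alpha)$, so equality holds exactly for it.

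The only step beyond routine substitution is this equality bookkeeping, namely confirming that the equality hypotheses of Theorem~\ref{theorem4} mesh with those of the two invariant bounds. It works out cleanly: configuration~(ii) of Theorem~\ref{theorem4} is a non-regular spectrum and so never arises in the regular case, while in the non-regular case the strict inequality for $M_1$ already forces a strict conclusion, so no equality case is lost. One should also record at the outset that $\Delta-\frac{2\alpha m}{n}\ge\frac{2m(1-\alpha)}{n}>0$, so every quotient appearing in the argument is well defined.
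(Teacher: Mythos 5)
Your proof follows the paper's argument essentially verbatim: you divide the bound of Theorem~\ref{theorem4} using $s_1\le\Delta-\frac{2\alpha m}{n}$ for the denominator and the Zagreb-index lower bound $M_1\ge\frac{4m^2}{n}+\frac{(\Delta-\delta)^2}{2}$ for the numerator, and your equality bookkeeping (regularity forces equality in both auxiliary estimates, configuration (ii) of Theorem~\ref{theorem4} is excluded because it forces $p_1>\frac{2m}{n}$, leaving $K_{\frac n2,\frac n2}$) matches the paper's intent, with the $K_{\frac n2,\frac n2}$ exclusion argument actually spelled out more carefully than in the paper. The one place your justification slips is the strictness claim for non-regular $G$: you attribute it entirely to the strict inequality $M_1>\frac{4m^2}{n}+\frac{(\Delta-\delta)^2}{2}$, but that term enters the numerator with coefficient $\alpha^2$ and therefore contributes nothing when $\alpha=0$, where the numerator bound is an equality regardless of $M_1$. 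The conclusion still holds for all $\alpha\in[0,1)$ because your other estimate $p_1\le\Delta$ is itself strict for connected non-regular graphs, so $s_1<\Delta-\frac{2\alpha m}{n}$; citing the denominator's strictness (instead of, or in addition to, the numerator's) closes the case $\alpha=0$.
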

\begin{corollary}
For a connected $r$-regular graph $G$ with order $n$ and size $m$, we have
\begin{align}
E(A_\alpha(G))\geq
\begin{cases}
(1-\alpha)n,\,\,\text{if}\,\, s_n=0\,\, \text{and equality holds if and only if}\,\, G\cong K_{\frac{n}{2},\frac{n}{2}}\\
2(1-\alpha)nr\dfrac{\sqrt{s_n}}{r+s_n}, \,\, \text{if} \,\, s_n>0. \label{equation8}
\end{cases}
\end{align}
\end{corollary}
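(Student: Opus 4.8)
The plan is to split on whether $s_n=0$ or $s_n>0$, and in each case to reduce everything to two elementary consequences of $G$ being $r$-regular. Since $G$ is $r$-regular we have $2m=nr$, so $\frac{2\alpha m}{n}=\alpha r$, and Lemma \ref{Lemma 4} gives $p_1=\frac{2m}{n}=r$; hence $s_1=p_1-\frac{2\alpha m}{n}=(1-\alpha)r$. Moreover $M_1=\sum_{i=1}^n d_i^2=nr^2$ and $\frac{4\alpha^2 m^2}{n}=\alpha^2 nr^2$, so the quantity in Lemma \ref{lemma 2}(iii) collapses to $\alpha^2 M_1+(1-\alpha)^2 2m-\frac{4\alpha^2 m^2}{n}=(1-\alpha)^2 nr$; that is, $\sum_{i=1}^n s_i^2=(1-\alpha)^2 nr$.

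If $s_n=0$, I would feed these two identities into Theorem \ref{theorem4}: its lower bound $\frac{\alpha^2 M_1+(1-\alpha)^2 2m-4\alpha^2 m^2/n}{s_1}$ becomes $\frac{(1-\alpha)^2 nr}{(1-\alpha)r}=(1-\alpha)n$, which is the first branch. For the equality claim I would invoke the equality characterization of Theorem \ref{theorem4}: alternative (ii) there requires $h>\frac{2m}{n}(1-\alpha)=(1-\alpha)r$, which for a regular graph would force $p_1=h+\alpha r>r$, impossible; hence under our hypothesis equality can occur only in alternative (i), $G\cong K_{\frac n2,\frac n2}$. Conversely one checks directly that $K_{\frac n2,\frac n2}$ is $\frac n2$-regular with $s$-values $\frac n2(1-\alpha)$ (twice) and $0$ otherwise, so $E(A_\alpha(K_{\frac n2,\frac n2}))=(1-\alpha)n$, confirming equality is attained.

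If $s_n>0$, rather than cite Theorem \ref{theorem 3} verbatim (its statement carries an extra two-value hypothesis on the $s_i$ that a general regular graph need not satisfy), I would rerun the part of its proof that uses only Lemma \ref{Lemma 3}: applying Lemma \ref{Lemma 3} to $s_1\ge\cdots\ge s_n\ge 0$ gives $E(A_\alpha(G))=\sum_{i=1}^n s_i\ge\frac{\sum_{i=1}^n s_i^2+n s_1 s_n}{s_1+s_n}$, and then $x+y\ge 2\sqrt{xy}$ on the numerator together with $\sum_{i=1}^n s_i^2=(1-\alpha)^2 nr$ yields $E(A_\alpha(G))\ge 2\sqrt{n\sum_{i=1}^n s_i^2}\,\frac{\sqrt{s_1 s_n}}{s_1+s_n}=2(1-\alpha)n\sqrt r\,\frac{\sqrt{s_1 s_n}}{s_1+s_n}$. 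The last step — and the only one needing an idea — is to replace $s_1$ by $r$: the map $x\mapsto\frac{\sqrt{x s_n}}{x+s_n}$ is increasing on $(0,s_n)$ and decreasing on $(s_n,\infty)$, and since $s_n\le s_1=(1-\alpha)r\le r$ both $s_1$ and $r$ lie on the decreasing branch, so $\frac{\sqrt{s_1 s_n}}{s_1+s_n}\ge\frac{\sqrt{r s_n}}{r+s_n}$; substituting and using $\sqrt r\cdot\sqrt{r s_n}=r\sqrt{s_n}$ gives $E(A_\alpha(G))\ge 2(1-\alpha)nr\,\frac{\sqrt{s_n}}{r+s_n}$, as claimed. (One could alternatively keep the sharper intermediate bound $n(1-\alpha)r\,\frac{(1-\alpha)+s_n}{(1-\alpha)r+s_n}$, but matching the stated form is cleanest via this monotonicity observation; the rest is routine bookkeeping with the regular-graph identities.)
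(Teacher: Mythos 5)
Your proposal is correct and follows the paper's intended route (specialize Theorems \ref{theorem4} and \ref{theorem 3} to the regular case via $2m=nr$, $M_1=nr^2$, $s_1=(1-\alpha)r$), but you repair two points that the paper's one-line proof glosses over, and both repairs are genuinely needed. First, Theorem \ref{theorem 3} as stated carries the two-value hypothesis $s_1=\dots=s_t=h\ge s_{t+1}=\dots=s_n=k$, which an arbitrary regular graph need not satisfy; your observation that the inequality chain only uses Lemma \ref{Lemma 3} (the hypothesis enters only in the equality analysis) is the right fix. Second, a verbatim substitution of $s_1=(1-\alpha)r$ into Theorem \ref{theorem 3}'s bound yields $\frac{2(1-\alpha)^{3/2}nr\sqrt{s_n}}{(1-\alpha)r+s_n}$, not the stated $\frac{2(1-\alpha)nr\sqrt{s_n}}{r+s_n}$; these differ, and passing from the former to the latter requires exactly the monotonicity of $x\mapsto\frac{\sqrt{xs_n}}{x+s_n}$ on $(s_n,\infty)$ together with $s_n\le s_1=(1-\alpha)r\le r$ that you supply (equivalently, one checks $(1-\alpha)^{1/2}(r+s_n)\ge(1-\alpha)r+s_n$). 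Your handling of the $s_n=0$ branch, including ruling out alternative (ii) of Theorem \ref{theorem4} because it would force $p_1>r$, and the direct verification for $K_{\frac n2,\frac n2}$, matches the paper and is sound. The only caveat worth recording is that the whole argument implicitly assumes $\alpha\in[0,1)$ (so that $s_1=(1-\alpha)r>0$), as in the theorems being invoked.
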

\begin{proof}
For a connected $r$- regular graph, we have $2m=nr$, $M_1=nr^2$ and the largest $A_\alpha$-eigenvalue is $r$, that is, $s_1=r(1-\alpha)$. Therefore, the first inequality in Equation \ref{equation8} is obvious from Theorem \ref{theorem4}. The second inequality in Equation \ref{equation8} is also obvious from Theorem \ref{theorem 3} for $s_n>0$.
\end{proof}
\begin{theorem}\label{theorem5}
Let $G$ be a graph with order $n$ and size $m$. Then, for $\alpha \in [0,1)$, we have
\begin{align*}
E(A_\alpha(G))\leq \sqrt{\frac{y}{n}}+\sqrt{(n-1)\left( y -\frac{y}{n}\right)},
\end{align*}
where $y=\alpha^2M_1+(1-\alpha)^2 2m-\frac{4\alpha^2m^2}{n} $ and equality holds if and only if $G\cong K_2$ or $G\cong\frac{n}{2}K_2.$
\end{theorem}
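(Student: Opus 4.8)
The strategy is a Koolen--Moulton-type optimisation on the sorted nonnegative numbers $s_1\ge s_2\ge\cdots\ge s_n$. Recall $E(A_\alpha(G))=\sum_{i=1}^n s_i$ and, by Lemma \ref{lemma 2}(iii), $\sum_{i=1}^n s_i^2=y$ with $y=\alpha^2M_1+(1-\alpha)^22m-\tfrac{4\alpha^2m^2}{n}$. First I would isolate the largest term and bound the remaining $n-1$ terms by the Cauchy--Schwarz inequality,
$$\sum_{i=2}^{n}s_i\ \le\ \sqrt{(n-1)\sum_{i=2}^{n}s_i^2}\ =\ \sqrt{(n-1)\bigl(y-s_1^2\bigr)},$$
so that $E(A_\alpha(G))\le g(s_1)$, where $g(x)=x+\sqrt{(n-1)(y-x^2)}$ on $[0,\sqrt y\,]$. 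This $g$ is concave and $g'(x)=1-\tfrac{\sqrt{n-1}\,x}{\sqrt{y-x^2}}$ vanishes exactly at $x_0=\sqrt{y/n}$, so $E(A_\alpha(G))\le g(x_0)=\sqrt{\tfrac yn}+\sqrt{(n-1)\bigl(y-\tfrac yn\bigr)}$, which is the claimed bound. (A quick computation shows this common value is just $\sqrt{ny}$, so the estimate is really Cauchy--Schwarz applied to all $n$ terms at once; the displayed form is the one the optimisation hands us and is the convenient one for discussing equality.)

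For the equality case both steps must be tight. Equality in Cauchy--Schwarz forces $s_2=\cdots=s_n$, and attaining the maximum of $g$ forces $s_1=x_0=\sqrt{y/n}$; substituting into $s_1^2+(n-1)s_2^2=y$ gives $s_2^2=y/n$, hence $s_1=s_2=\cdots=s_n=:s$. So every $\eta_i=p_i-\tfrac{2\alpha m}{n}$ equals $\pm s$, and $s\neq0$ (else all $p_i=\tfrac{2\alpha m}{n}<\tfrac{2m}{n}$ since $\alpha<1$ and $m\ge1$, contradicting Lemma \ref{Lemma 4}). Because $\sum_{i=1}^n\eta_i=2\alpha m-n\cdot\tfrac{2\alpha m}{n}=0$, exactly $n/2$ of the $\eta_i$ equal $+s$ and $n/2$ equal $-s$; in particular $n$ is even and $Spec~(A_\alpha(G))=\{[\tfrac{2\alpha m}{n}+s]^{n/2},\,[\tfrac{2\alpha m}{n}-s]^{n/2}\}$. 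From this point I would argue verbatim as in Case~1 of the proof of Theorem \ref{theorem 3}: the $A_\alpha$-spectral radius $\tfrac{2\alpha m}{n}+s$ has multiplicity $n/2$, so by Proposition \ref{proposition1} either $n=2$, giving $G\cong K_2$, or $G$ is disconnected with exactly $n/2$ components each carrying the spectrum $\{\tfrac{2\alpha m}{n}\pm s\}$, hence each on exactly two vertices, so each is $K_2$ and $G\cong\frac n2K_2$. Conversely, for $G\cong K_2$ or $G\cong\frac n2K_2$ one checks via Lemma \ref{lemma 1} that $s_i=1-\alpha$ for all $i$, so equality holds.

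The two inequalities and the single-variable maximisation are routine. The delicate part is the equality analysis, and within it the step ruling out components other than $K_2$ (an isolated vertex, or a larger complete-type component, would force $\tfrac{2\alpha m}{n}-s=0$ and then conflict with the prescribed order and size); this is handled exactly as in the corresponding sub-case of Theorem \ref{theorem 3}, so I expect no real obstacle beyond the bookkeeping already done there.
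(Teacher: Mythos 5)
Your proposal is correct and follows essentially the same route as the paper: split off the largest auxiliary eigenvalue, apply Cauchy--Schwarz to the remaining $n-1$ terms using Lemma \ref{lemma 2}(iii), maximise $x+\sqrt{(n-1)(y-x^2)}$ at $x=\sqrt{y/n}$, and in the equality case use $\sum_i\eta_i=0$ together with Proposition \ref{proposition1} to force $K_2$ or $\frac{n}{2}K_2$. The only cosmetic differences are your observation that the bound equals $\sqrt{ny}$ and your slightly more explicit handling of the sign count $n/2$ versus $n/2$, which the paper reaches via its Case 1/Case 2 split.
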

\begin{proof}
Let $\eta_i=p_i-\frac{2\alpha m}{n}$ for $i\in \{1,2,\dots,n\}$, where $\eta_1\geq \eta_2\geq \dots\geq \eta_n$. For $\alpha \in [0,1)$, we have $p_1\geq \frac{2m}{n}>\frac{2\alpha m}{n}$ and thus by using Cauchy-Schwarz inequality, we get
\begin{align}
E(A_\alpha(G))&=\big|\eta_1 \big|+\sum_{i=2}^{n}\big| \eta_i\big|\nonumber\\
& \leq  \eta_1+\sqrt{(n-1)\sum_{i=2}^{n} \eta_i^2}\nonumber\\
&=\eta_1+\sqrt{(n-1)\left( \alpha^2M_1+(1-\alpha)^2 2m-\frac{4\alpha^2m^2}{n}-\eta_1^2\right)}\,\, (\text{using Lemma \ref{lemma 2}(iii)})\nonumber
\end{align}
Now, consider the function $f(x)=x+\sqrt{(n-1)\left( \alpha^2M_1+(1-\alpha)^2 2m-\frac{4\alpha^2m^2}{n}-x^2\right)}$ on the defined domain $0< x\leq \sqrt{ \alpha^2M_1+(1-\alpha)^2 2m-\frac{4\alpha^2m^2}{n}} $ where the variable $x$ stands for $\eta_1$. Then
$$f'(x)=1-\dfrac{x\sqrt{n-1}}{\sqrt{ \alpha^2M_1+(1-\alpha)^2 2m-\frac{4\alpha^2m^2}{n}-x^2}}$$ and so $f$ is decreasing on 
$$U=\Bigg\{ x\bigg| \sqrt{\dfrac{ \alpha^2M_1+(1-\alpha)^2 2m-\frac{4\alpha^2m^2}{n}}{n}}\leq x\leq \sqrt{ \alpha^2M_1+(1-\alpha)^2 2m-\frac{4\alpha^2m^2}{n}}\Bigg\}$$ 
and is increasing on 
$$V=\Bigg\{ x\bigg|0<x\leq \sqrt{\dfrac{ \alpha^2M_1+(1-\alpha)^2 2m-\frac{4\alpha^2m^2}{n}}{n}} \Bigg\}.$$ 
Let $y= \alpha^2M_1+(1-\alpha)^2 2m-\frac{4\alpha^2m^2}{n}$. Now, if $\eta_1 \in V$, we have
\begin{align}
E(A_\alpha(G))&\leq f(\eta_1) \leq  f(\sqrt{\dfrac{y}{n}}) \label{equation9}\\
&=f(\sqrt{\dfrac{y}{n}}+\sqrt{(n-1)(y-\dfrac{y}{n})})\label{equation10}
\end{align}
Therefore, the first part of Equation \ref{equation9} is an equality if and only if $\eta_2=\eta_3=\dots=\eta_n$  and the second part of Equation \ref{equation9} is an equality if and only if $\eta_1=\sqrt{\dfrac{ \alpha^2M_1+(1-\alpha)^2 2m-\frac{4\alpha^2m^2}{n}}{n}}$. Finally, Equation \ref{equation9} is an equality if and only if $\eta_1=\sqrt{\dfrac{ \alpha^2M_1+(1-\alpha)^2 2m-\frac{4\alpha^2m^2}{n}}{n}}$  and $\eta_i=p_i-\dfrac{2\alpha m}{n}=\pm\sqrt{\dfrac{ \alpha^2M_1+(1-\alpha)^2 2m-\frac{4\alpha^2m^2}{n}}{n}}$ (using Lemma \ref{lemma 2}(iii)) for $i\in\{2,3,\dots,n\}.$\\
 that is, $p_1=\sqrt{\dfrac{ \alpha^2M_1+(1-\alpha)^2 2m-\frac{4\alpha^2m^2}{n}}{n}}+\dfrac{2\alpha m}{n}$ and  $p_i \in \Bigg\{ \sqrt{\dfrac{ \alpha^2M_1+(1-\alpha)^2 2m-\frac{4\alpha^2m^2}{n}}{n}}+\dfrac{2\alpha m}{n} , -\sqrt{\dfrac{ \alpha^2M_1+(1-\alpha)^2 2m-\frac{4\alpha^2m^2}{n}}{n}}+\dfrac{2\alpha m}{n}\Bigg\}$ for $i\in \{2,3,\dots,n\}$.\\
 \textbf{Case 1.} $G$ is connected, that is, the spectral radius of $G$ has multiplicity 1. So $Spec~(G)=\Bigg\{ \sqrt{\frac{ \alpha^2M_1+(1-\alpha)^2 2m-\frac{4\alpha^2m^2}{n}}{n}}+\frac{2\alpha m}{n} ,\Bigr[ -\sqrt{\frac{ \alpha^2M_1+(1-\alpha)^2 2m-\frac{4\alpha^2m^2}{n}}{n}}+\frac{2\alpha m}{n}\Bigr]^{n-1}\Bigg\}.$\\
  Then, we have
 \begin{align*}
 & \Biggl( \sqrt{\frac{ \alpha^2M_1+(1-\alpha)^2 2m-\frac{4\alpha^2m^2}{n}}{n}}+\frac{2\alpha m}{n}\Biggr)+(n-1)\Biggl(-\sqrt{\frac{ \alpha^2M_1+(1-\alpha)^2 2m-\frac{4\alpha^2m^2}{n}}{n}}+\frac{2\alpha m}{n} \Biggr)\\
& =\sqrt{\frac{ \alpha^2M_1+(1-\alpha)^2 2m-\frac{4\alpha^2m^2}{n}}{n}}(2-n)+2\alpha m,
 \end{align*}
 which will satisfy Lemma \ref{lemma 2}(i) provided $n=2$. Therefore, for a connected graph, equality in Equation \ref{equation10} happens if and only if $G\cong K_2$. \\
 \textbf{Case 2.} If $G$ is disconnected, then there exists some natural number $l$, where $2\leq l<n$ such that  $A_\alpha$- spectrum of $G$ is  $\sqrt{\frac{ \alpha^2M_1+(1-\alpha)^2 2m-\frac{4\alpha^2m^2}{n}}{n}}+\frac{2\alpha m}{n}, -\sqrt{\frac{ \alpha^2M_1+(1-\alpha)^2 2m-\frac{4\alpha^2m^2}{n}}{n}}+\frac{2\alpha m}{n}$ with multiplicity $l$ and $n-l$ respectively. Then we have
 \begin{align*}
& l\Biggl( \sqrt{\frac{ \alpha^2M_1+(1-\alpha)^2 2m-\frac{4\alpha^2m^2}{n}}{n}}+\frac{2\alpha m}{n}\Biggr)+(n-l)\Biggl(-\sqrt{\frac{ \alpha^2M_1+(1-\alpha)^2 2m-\frac{4\alpha^2m^2}{n}}{n}}+\frac{2\alpha m}{n} \Biggr)\\
& =\sqrt{\frac{ \alpha^2M_1+(1-\alpha)^2 2m-\frac{4\alpha^2m^2}{n}}{n}}(2l-n)+2\alpha m,
 \end{align*}
   which will satisfy Lemma \ref{lemma 2}(i) provided $2l+n=0$, that is, $l=\frac{n}{2}$, which implies that $n$ is even natural such that $n\geq 2$. Since $G$ is disconnected and so it is obvious that $G\cong \frac{n}{2}K_2$.
\par Again, if $\eta_1 \in U$, that is
   \begin{align*}
   \sqrt{\frac{\alpha^2M_1+(1-\alpha)^22m-\frac{4\alpha^2m^2}{n}}{n}}\leq \eta_1 \leq \sqrt{\alpha^2M_1+(1-\alpha)^22m-\frac{4\alpha^2m^2}{n}}
   \end{align*}
 and hence same as Equation \ref{equation10}. Therefore the conclusion follows as above.
\end{proof}
\begin{corollary}
Let $G$ be a connected graph with $n$ vertices and $m$ edges. Then
\begin{align*}
E(A_\alpha(G))\leq n\sqrt{\frac{2m(1-\alpha)^2}{n}+\frac{\alpha^2}{4}(\Delta-\delta)^2},
\end{align*}
where equality holds if and only if $G\cong K_2.$
\end{corollary}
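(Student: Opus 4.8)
The plan is to derive this corollary directly from Theorem \ref{theorem5} by substituting the upper bound on the Zagreb index from Lemma \ref{lemma8}. Recall that Theorem \ref{theorem5} gives $E(A_\alpha(G))\leq \sqrt{y/n}+\sqrt{(n-1)(y-y/n)}$ with $y=\alpha^2 M_1+(1-\alpha)^2 2m-\frac{4\alpha^2 m^2}{n}$. The first step is to note that the right-hand side of Theorem \ref{theorem5}, viewed as a function of $y$, is increasing in $y$ (it is a sum of two square roots of positive multiples of $y$), so any upper bound on $y$ translates into an upper bound on $E(A_\alpha(G))$.

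Next I would plug in Lemma \ref{lemma8}, which says $M_1\leq \frac{4m^2}{n}+\frac{n}{4}(\Delta-\delta)^2$. Substituting, we get
\begin{align*}
y=\alpha^2 M_1+(1-\alpha)^2 2m-\frac{4\alpha^2 m^2}{n}\leq \alpha^2\left(\frac{4m^2}{n}+\frac{n}{4}(\Delta-\delta)^2\right)+(1-\alpha)^2 2m-\frac{4\alpha^2 m^2}{n}=\frac{\alpha^2 n}{4}(\Delta-\delta)^2+(1-\alpha)^2 2m.
\end{align*}
Denote this bound by $y^*=\frac{\alpha^2 n}{4}(\Delta-\delta)^2+(1-\alpha)^2 2m$. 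Then $y/n\leq y^*/n$ and $y-y/n\leq y^*-y^*/n=y^*(n-1)/n$, so from Theorem \ref{theorem5},
\begin{align*}
E(A_\alpha(G))\leq \sqrt{\frac{y^*}{n}}+\sqrt{(n-1)\cdot\frac{y^*(n-1)}{n}}=\sqrt{\frac{y^*}{n}}+(n-1)\sqrt{\frac{y^*}{n}}=n\sqrt{\frac{y^*}{n}}=n\sqrt{\frac{y^*}{n}}.
\end{align*}
Expanding $y^*/n=\frac{\alpha^2}{4}(\Delta-\delta)^2+\frac{2m(1-\alpha)^2}{n}$ yields exactly the claimed bound $E(A_\alpha(G))\leq n\sqrt{\frac{2m(1-\alpha)^2}{n}+\frac{\alpha^2}{4}(\Delta-\delta)^2}$.

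For the equality characterization, equality forces equality both in Theorem \ref{theorem5} and in Lemma \ref{lemma8}. By Theorem \ref{theorem5}, equality there holds iff $G\cong K_2$ or $G\cong \frac{n}{2}K_2$; by Lemma \ref{lemma8}, equality holds iff $G$ is regular. Both $K_2$ and $\frac{n}{2}K_2$ are regular, so the regularity condition from Lemma \ref{lemma8} is automatically consistent; however, one must also check that the substitution step $y\leq y^*$ does not cost anything, which it does not precisely when $M_1$ attains the bound in Lemma \ref{lemma8}, i.e. when $G$ is regular. Since the corollary restricts to connected $G$, the disconnected option $\frac{n}{2}K_2$ is ruled out (for $n>2$), leaving $G\cong K_2$. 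One should double-check the small case $n=2$ directly: for $K_2$ we have $\Delta=\delta=1$, $m=1$, so the right side is $2\sqrt{(1-\alpha)^2}=2(1-\alpha)$, and indeed $E(A_\alpha(K_2))=2(1-\alpha)$ since the $A_\alpha$-eigenvalues of $K_2$ are $1$ and $2\alpha-1$. The only mild subtlety I anticipate is being careful that the equality in Theorem \ref{theorem5} and equality in Lemma \ref{lemma8} are simultaneously achievable by a connected graph, which the verification via $K_2$ settles cleanly.
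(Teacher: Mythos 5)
Your proposal is correct and follows essentially the same route as the paper: substitute the de Caen/Fath-Tabar-type bound of Lemma \ref{lemma8} on $M_1$ into the bound of Theorem \ref{theorem5}, note that the resulting expression collapses to $n\sqrt{y^*/n}$, and combine the equality conditions (regularity from Lemma \ref{lemma8}, $K_2$ or $\frac{n}{2}K_2$ from Theorem \ref{theorem5}, with connectedness eliminating the latter) to get $G\cong K_2$. Your write-up is in fact more careful than the paper's one-line argument, particularly in making the monotonicity-in-$y$ step and the $K_2$ verification explicit.
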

\begin{proof}
Here the inequality can be obtained by using Lemma \ref{lemma8}. Also equality holds if and only if $G$ is regular and $n=2$ which implies $G\cong K_2.$
\end{proof}
\begin{corollary}
Let $G$ be a regular graph with order $n$ and size $m$. Then
\begin{align*}
E(A_\alpha(G))\leq n(1-\alpha) \sqrt{\frac{2m}{n}}
\end{align*}
with equality if and only if $G\cong K_2$.
\end{corollary}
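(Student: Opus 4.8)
The plan is to read this corollary straight off the general upper bound in Theorem~\ref{theorem5} by specializing to the regular case. First I would record the two identities that regularity buys us: if $G$ is $r$-regular on $n$ vertices with $m$ edges, then $r=\frac{2m}{n}$ and $M_1=\sum_{i=1}^n d_i^2=nr^2=\frac{4m^2}{n}$.

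Next I would substitute these into the quantity $y=\alpha^2M_1+(1-\alpha)^2 2m-\frac{4\alpha^2 m^2}{n}$ that appears in Theorem~\ref{theorem5}. The term $\alpha^2 M_1=\frac{4\alpha^2 m^2}{n}$ cancels the last term, leaving $y=2m(1-\alpha)^2$. Then the right-hand side of Theorem~\ref{theorem5} telescopes: since $y-\frac{y}{n}=\frac{(n-1)y}{n}$, the second radical equals $\sqrt{(n-1)\cdot\frac{(n-1)y}{n}}=(n-1)\sqrt{\frac{y}{n}}$, so the bound collapses to $\sqrt{\frac{y}{n}}+(n-1)\sqrt{\frac{y}{n}}=n\sqrt{\frac{y}{n}}=n\sqrt{\frac{2m(1-\alpha)^2}{n}}=n(1-\alpha)\sqrt{\frac{2m}{n}}$, where the last step uses $\alpha\in[0,1)$ so that $\sqrt{(1-\alpha)^2}=1-\alpha$.

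For the equality discussion I would invoke the equality characterization in Theorem~\ref{theorem5}: equality there holds exactly for $G\cong K_2$ or $G\cong\frac{n}{2}K_2$, and both graphs are regular (for $n=2$ the second coincides with $K_2$), so among regular graphs equality is attained precisely when $G$ is a disjoint union of edges; when one additionally asks $G$ to be connected this forces $G\cong K_2$. Alternatively, for connected regular $G$ the whole corollary follows immediately from the preceding corollary by setting $\Delta=\delta$, which kills the term $\frac{\alpha^2}{4}(\Delta-\delta)^2$. There is essentially no obstacle beyond the bookkeeping of the cancellation $y=2m(1-\alpha)^2$ and the telescoping sum; the only points that deserve a word of care are the sign identity $\sqrt{(1-\alpha)^2}=1-\alpha$ (licensed by $\alpha<1$) and making sure the equality case is reported so as to include $\frac{n}{2}K_2$, not only $K_2$.
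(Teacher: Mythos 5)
Your proposal is correct and is essentially the paper's own proof, which simply specializes Theorem~\ref{theorem5} to regular $G$ (where $M_1=\tfrac{4m^2}{n}$ kills the $-\tfrac{4\alpha^2m^2}{n}$ term and the bound telescopes to $n\sqrt{y/n}$ exactly as you compute). Your remark that the equality case should, strictly speaking, also admit $G\cong\frac{n}{2}K_2$ (which is regular and attains $n(1-\alpha)=n(1-\alpha)\sqrt{2m/n}$ since $2m=n$) is a valid point that the paper's stated equality condition glosses over unless connectedness is assumed.
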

\begin{proof}
This can be attained from Theorem \ref{theorem5} considering $G$ to be regular.
\end{proof}
\section{Conclusion}
In this present work, we have obtained $A_\alpha$-spectra of strongly regular graphs with given parameters and some results on $A_\alpha$-eigenvalues of a graph. We have given a modification of a lower bound of $A_\alpha$-energy of a graph $G$, obtained in \cite{4}. Further, we have obtained some bounds of $A_\alpha$-energy of graphs in terms of given  order, size, Zagreb index and extremal degrees of $G$. Equality cases are also analyzed and extremal graphs are obtained.\\

\noindent{\bf Acknowledgments:} The research of S. Pirzada is supported by SERB-DST, New Delhi under the research project number CRG/2020/000109. A. Bharali is thankful to SERB-DST, New Delhi for supporting his research under the Teachers Associateship for Research Excellence (File No.: TAR/2021/000045).



\begin{thebibliography}{99}
\bibitem{16} N. Abreu, D. M. Cardoso, I. Gutman, E. A. Martins, and M. Robbiano, \textit{Bounds for the signless Laplacian energy}, Linear Algebra Appl., {\bf 435} (2011) 2365-2374.
\bibitem{20} R. B. Bapat, Graphs and Matrices, Springer London, 2010.
\bibitem{2} A. E. Brouwer and W. H. Haemers, Spectra of Graphs, Springer New York, 2012.
\bibitem{8} D. de Caen, \textit{An upper bound on the sum of squares of degrees in a graph}, Discrete Math., {\bf 185} (1998) 245-248.
\bibitem{18} Y. Chen, D. Li, and J. Meng, \textit{Nordhaus-Gaddum type inequalities of the second $A_\alpha$-eigenvalue of a graph}, Linear Algebra Appl., {\bf 602} (2020) 57-72.
\bibitem{7} Y. Chen and L. Wang, \textit{Sharp bounds for the largest eigenvalue of the signless Laplacian of a graph}, Linear Algebra Appl., {\bf 433} (2010) 908-913.
\bibitem{9} G. H. Fath-Tabar, \textit{Old and new Zagreb indices of graphs}, MATCH Commun. Math. Comput. Chem., {\bf 64(1)} (2011) 79-84.
\bibitem{6} H. Gou and B. Zhou, \textit{On the $\alpha$-spectral radius of graphs}, arXiv:1805.03245v1, (2018).
\bibitem{10} I. Gutman, \textit{The energy of a graph: old and new results}, Algebraic Comb. Appl., (2001) 196-211.
\bibitem{15} I. Gutman and B. Zhou, \textit{Laplacian energy of a graph}, Linear Algebra Appl., {\bf 414} (2006) 29-37.
\bibitem{17} H. Lin, X. Liu, and J. Xue, \textit{Graphs determined by their $A_\alpha$-spectra}, Discrete Math., {\bf 342} (2019) 441-450.
\bibitem{19} H. Lin, J. Xue, and J. Shu, \textit{On the $A_\alpha$-spectra of graphs}, Linear Algebra Appl., {\bf 556} (2018) 210-219.
\bibitem{12} M. Liu, H. Shan, and K. C. Das, \textit{Some graphs determined by their (signless) Laplacian spectra}, Linear Algebra Appl., {\bf 449} (2014) 154-165.
\bibitem{3} E. I. Milovanovi\'c and I. \v{Z} Milovanovi\'c, \textit{Sharp bounds for the first Zagreb index and first Zagreb coindex}, Miskolc Mathematical Notes, {\bf 16(2)} (2015) 1017-1024.
\bibitem{11} B. Mohar, \textit{The Laplacian spectrum of graphs}, Graph theory, Combinatorics and Applications, (Kalamazoo, MI, 1988), Wiley-Intersci. Publ., Wiley, New York, {\bf 2} (1991) 871-898.
\bibitem{1} V. Nikiforov, \textit{Merging the $A$- and $Q$- spectral theories}, Appl. Analysis Discrete Math. {\bf 11 (1)} (2017) 81-107.
\bibitem{5} M. R. Oboudi, \textit{A new lower bound for the energy of graphs}, Linear Algebra Appl., {\bf 508} (2019) 384-395.
\bibitem{13} S. Pirzada and H. A. Ganie, \textit{On the Laplacian eigenvalues of a graph and Laplacian energy}, Linear Algebra Appl., {\bf 486} (2015) 454-468.
\bibitem{4} S. Pirzada, B. A. Rather, H. A. Ganie, and R. ul Shaban, \textit{On $\alpha$-energy of graphs and Zagreb index}, AKCE International J. Graphs Comb., {\bf 18} (2021) 39-46.
\bibitem{14} P. Wang and Q. Huang, \textit{Some new bounds for the signless Laplacian energy of a graph}, arXiv:2010.03980v1[math.CO] 7 Oct 2020.
\end{thebibliography}
\end{document}